\newtheorem{theorem}{Theorem}[section]
\newtheorem{definition}[theorem]{Definition}
\newtheorem{lemma}[theorem]{Lemma}
\newtheorem{proposition}[theorem]{Proposition}
\newtheorem{remark}[theorem]{Remark}
\newenvironment{proof}[1][Proof]{\textbf{#1.} }{\hfill\rule{0.5em}{0.5em}}
{\catcode`\@=11\global\let\AddToReset=\@addtoreset
\AddToReset{equation}{section}

\AddToReset{theorem}{section}

\newcommand{\R}{\mathbb R} 
\newcommand{\N}{\mathbb N}

\newcommand{\tp}{\displaystyle \int \limits}

\newcommand{\su}{\mathop{\sup} \limits}

\newcommand{\vc}{\infty}
\newcommand{\na}{\nabla}
\newcommand{\q}{\omega}

\newcommand{\Lo}{L^1_{\operatorname{loc}}}

\newcommand{\p}{\mathscr{R}^{\La}_\beta}

\newcommand{\Del}{\Delta}
\newcommand{\V}{\mathbf V}
\newcommand{\La}{\mathcal L}
\newcommand{\Ri}{\mathscr R_\La}

\newcommand{\K}{\mathbf K}
\newcommand{\T}{\Ri}

\newcommand{\ls}{\lesssim}

\newcommand{\rh}{\rho^{-1}(y)}

\newcommand{\Mor}{\mathbb{M}^{p,s}_{\alpha,\theta}(\q)}
\newcommand{\Mo}{\mathbb{M}^{p,s}_{\alpha,\theta}(\q,\nu)}

\title{On boundedness property of singular integral operators associated to a Schr\"odinger operator in a  generalized Morrey space and applications}
\author{Le Xuan Truong$^{1}$, Nguyen Thanh Nhan$^2$, Nguyen Ngoc Trong$^{3,\star}$ \\ {\small $^1$Division of Computational Mathematics and Engineering, Institute for Computational Science, Ton Duc Thang
		University, Ho Chi Minh City, Vietnam\\
		Faculty of Mathematics and Statistics, Ton Duc Thang University, Ho Chi Minh City, Vietnam \\   Email: \texttt{lexuantruong@tdtu.edu.vn }}\medskip\\ {\small $^2$Department of Mathematics,  Ho Chi Minh City University of Education,\\ Ho Chi Minh City, Viet Nam \\ Email: \texttt{nhannt@hcmue.edu.vn}} \medskip\\ {\small $^3$Department of Primary Education, Ho Chi Minh City University of Education, Ho Chi Minh City, Vietnam \\  $^{\star}$ Corresponding author\\ Email: \texttt{trongnn@hcmue.edu.vn}}} 
\date{\today}

\begin{document}
 
\maketitle
\begin{abstract}
In this paper, we provide the boundedness property of the Riesz transforms associated to the Schr\"odinger operator $\La=-\Delta + \V$ in a new weighted Morrey space which is the generalized version of many previous Morrey type spaces. The additional potential $\V$ considered in this paper is a non-negative function satisfying the suitable reverse H\"older's inequality. Our results are new and general in many cases of problems. As an application of the boundedness property of these singular integral operators, we obtain some regularity results of solutions to Schr\"odinger equations in the new Morrey space.

\medskip

\noindent 

\medskip

\noindent {\bf Keywords:}  Weighted Morrey spaces, Sch\"odinger operator, Riesz transforms, Regularity estimates.

\end{abstract}   
                  
\section{Introduction}\label{sec:intro}

In 1938, the classical Morrey space was firstly introduced by Charles B. Morrey in~\cite{m1} for studying the second order elliptic equations. Several standard properties of Morrey space can be found in~\cite{aa1,duong1} and~\cite{x1}. The advantage of using this functional space lies in the fact that ones can obtain better regularity properties for solutions of the boundary elliptic and parabolic equations in Morrey space. However, the regularity results for many partial differential equations can be provided as applications of the boundedness properties of several singular integral operators. By these interesting applications, many mathematicians considered the boundedness properties of singular integral operators in different kinds of functional spaces so called Morrey type spaces.



Recently, many authors have considered this kind of problem by extending to several weighted Morrey spaces, for instance~\cite{duong1},~\cite{k1} and~\cite{Feuto2014}. They have showed that the singular integral operators are not only bounded in weighted Lebesgue spaces but also in weighted Morrey spaces. In addition, lots of Morrey type spaces associated to a Schr\"{o}dinger operator have been also studied (see~\cite{a,li,Y.Liu,ren,t1}) to extend the well-known Morrey spaces. In recent years the problem related to Schr\"{o}dinger operator has attracted a great deal of  attention of many mathematicians; see~\cite{a2,a1, b1, b2,c1,d1,d2,l1,Shen1995,z1} and references therein. 

Motivated by these works, we consider in this paper the boundedness property of some singular integral operators associated to a Schr\"odinger operator  $\La=-\Delta +\V$ on $\mathbb{R}^{n}$, $n\geq 3$ in new generalized Morrey spaces, where the potential $\V$ belongs to $RH_q$ for some $q>n/2$, i.e., there exists a constant $C = C(q, \V) > 0$ such that the reverse H\"{o}lder's inequality
$$
\Big(\frac{1}{|B|}\int\limits_{B}\V(x)^q dx\Big)^{\frac{1}{q}} \leq \frac{C}{|B|}\int\limits_{B}\V (x)dx
$$
holds for every ball $B \subset \mathbb{R}^n$.  More precisely, we establish the boundedness property of the $\La$-Riesz transform  $\mathscr{R}_{\La}=D^2 \La^{-1}$ and the  $\La$-fractional Riesz transform  $\mathscr{R}^{\La}_\beta=D \La^{-\beta}$ in new Morrey type spaces $\Mor$ and $\Mo$, respectively. The regularity result of solutions to Schr\"odinger type equations in these functional spaces is also obtained as an application. We note that all notations and definitions will be introduced in the next section.

The boundedness property of the $\La$-Riesz transform $\mathscr{R}_{\La}$ in $L^p$ has been studied by Zhong \cite{z1} with a non-negative polynomial $\V$ and by Shen \cite{Shen1995} if $\V\in RH_{n/2}$. On the other hand, the boundedness of the $\La$-fractional Riesz transform $\mathscr{R}^{\La}_\beta$ from  $L^{p}(\q^p)$ into $L^q(\q^q)$ has been proposed by Sugano~\cite{Sugano}. With our knowledge, the boundedness property of two above operators $\mathscr{R}_{\La}$ and $\mathscr{R}^{\La}_\beta$ have never been studied in our Morrey type space $\Mo$ even in the classical Morrey space $\mathbf{M}_{\theta/p}^{p}$. Hence, we believe that the results in this paper are general in many cases of the problem.

Moreover, we emphasize here that the space $\Mo$ in our paper is a generalized version of many well-known Morrey type spaces. 
\begin{itemize}
\item In the case when $\mathbb{V}= 0$, $\q = \nu = 1$, $s=\infty$ and $\alpha =0$, the space $\Mo$ becomes to the classical Morrey  $\mathbf{M}_{\theta/p}^{p}$. 
\item In 1988, Fofana \cite{f3} proposed an extension of the classical Morrey space (see~\cite{f1,f2}) the space $(L^{p},L^{s})^{\theta }$ as follows
\begin{align*}
\left\Vert f\right\Vert _{(L^{p},L^{s})^{\theta }} =\underset{r>0}{\sup }\left[ \int\limits_{\mathbb{R}^{n}}\Big( |B(x,r)|^{-\theta }\left\Vert f\chi _{B(x,r)}\right\Vert _{L^{p}(\R^n)} \Big) ^{s}dx\right] ^{1/s}<\infty.
\end{align*}
When $\mathbb{V}= 0$, $\q = \nu = 1$ and $\alpha =0$, the space $\Mo$ coincides to $(L^{p},L^{s})^{\theta }$.
\item In 2009, Komori and Shirai \cite{KS2009} introduced the Morrey type space $M_{q}^{p}\left(\q,\nu\right)$ with two Muckenhoupt weights $\q$ and $\nu$ as
	\begin{align*}
\left\| f \right\|_{M_q^p\left( {\q,\nu} \right)}^p  = \mathop {\sup }\limits_{B\left( {{x_0},r} \right) \subset \R^n } \nu{\left( {B\left( {{x_0},r} \right)} \right)^{1/p - 1/q}}\int\limits_{B\left( {{x_0},r} \right)} {{{\left| {f\left( x \right)} \right|}^p}\q\left( x \right)dx < \infty. }
	\end{align*}
When  $\mathbb{V}=0$, $s=\infty$, $\theta =1/q-1/p$ and $\alpha =0$, two Morrey spaces $M_{q}^{p}\left( \q,\nu\right)$ and $\Mo$ are exactly the same.
\item In 2009, Tang and Dong \cite{t1} defined a Morrey space $L^{p,\lambda}_{\alpha, \V}$ associated to Schr\"odinger operator by
	\begin{equation*}
	\left\| f \right\|_{L_{\alpha ,\V}^{p,\lambda }}^p = \mathop {\sup }\limits_{B\left( {{x_0},r} \right) \subset \R^n } {\left( {1 + \frac{r}{{\rho \left( {{x_0}} \right)}}} \right)^\alpha }{r^{ - \lambda }}\int\limits_{B\left( {{x_0},r} \right)} {{{\left| {f\left( x \right)} \right|}^p}dx < \infty }.
	\end{equation*}
When $s=\infty$ and $\q = \nu = 1$, the space $\Mo$ is also $L_{\alpha p,\V}^{p,n\theta p}$.	
\item Later in 2014, Feuto \cite{Feuto2014} extended a Morrey type space $(L^p(\q),L^s)^\alpha$ equipped to the norm
	\begin{align*}
	 \left\Vert f\right\Vert _{(L^p(\q),L^s)^\alpha} = \su_{r>0} \Big[ \int\limits_{\mathbb{R}^{n}}\Big( \q \left(	B(x,r)\right)^{\frac{1}{\alpha}-\frac{1}{p}-\frac{1}{s} }\left\Vert f\chi _{B(x,r)}\right\Vert _{L^{p}\left( \q \right)} \Big) ^{s}dx\Big] ^{1/s}<\infty .
	\end{align*}	
It is easy to see that if $\V= 0$, $\theta=-\frac{1}{\alpha}+\frac{1}{p}+\frac{1}{s}$ and $\q= \nu$ then the Morrey space  $\Mo$ becomes to the space $(L^p(\q),L^s)^\alpha$ in~\cite{Feuto2014}. 

\item In 2014, Liu and Wang \cite{Y.Liu} defined a weighted Morrey space $L^{p,\lambda}_{\alpha, \V,\q}$ associated to Schr\"odinger operator by
\begin{equation*}
\left\| f \right\|_{L_{\alpha ,\V}^{p,\lambda }}^p = \mathop {\sup }\limits_{B\left( {{x_0},r} \right) \subset \R^n } {\left( {1 + \frac{r}{{\rho \left( {{x_0}} \right)}}} \right)^\alpha }{\q(B(x_0,2r))^{ - \lambda }}\int\limits_{B\left( {{x_0},r} \right)} {{{\left| {f\left( x \right)} \right|}^p}\q(x)dx < \infty }.
\end{equation*}
When $s=\infty$ and $\nu = 1$, the space $\Mo$ is also $L_{\alpha p,\V,\q}^{p,n\theta p}$.
\end{itemize}

The first goal of this paper is to prove the boundedness of the Riesz transform $\Ri$ in the Morrey space $\mathbb{M}^{p,s}_{\alpha,\theta}(\q)$, where $\q$ belongs to a class of Muckenhoupt weights $A_{p}$. We then apply this boundedness property to obtain the regularity result of solutions to Schr\"odinger equations $(-\Del +\V)u=f$. 
\begin{theorem}\label{theo1} 
Let $1< p<s\leq \vc,\ \alpha	\in \R, \ \theta \in \left[0,1/p\right)$ and $\q \in A_{p}$. Then for $\V \in RH^\star_{n/2}$, the Riesz transform $\T$ is bounded on $\Mor$, i.e.
\begin{equation}\label{Ri_bound}
	\left\Vert \T f\right\Vert _{\Mor}\ls \left\Vert f\right\Vert _{\Mor},
	\end{equation}
	for all $f \in \Mor$.
\end{theorem}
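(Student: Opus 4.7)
The plan is to follow the standard Calder\'on--Zygmund splitting adapted to the Schr\"odinger setting, using the kernel estimates of Shen for $\T=D^2\La^{-1}$ together with the weighted $L^p$-boundedness of $\T$ on $L^p(\q)$ for $\q\in A_p$ (which is available because $\V\in RH^\star_{n/2}$). Fix a ball $B=B(x_0,r)$ and decompose $f=f_1+f_2$ where $f_1=f\chi_{2B}$ and $f_2=f\chi_{\R^n\setminus 2B}$. Then I would estimate
\[
\|\T f\,\chi_B\|_{L^p(\q)}\le\|\T f_1\,\chi_B\|_{L^p(\q)}+\|\T f_2\,\chi_B\|_{L^p(\q)},
\]
and treat the local and the global pieces separately.

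For the local piece, the $A_p$-weighted $L^p$ boundedness of $\T$ gives $\|\T f_1\|_{L^p(\q)}\lesssim\|f\|_{L^p(\q,2B)}$, and after inserting the factors $(1+r/\rho(x_0))^{-\alpha}$ and $\q(B)^{-\theta}$ defining the local Morrey norm, this contribution is directly controlled by $\|f\|_{\Mor}$ using the doubling property $\q(2B)\lesssim\q(B)$ and the essentially equivalent behaviour of $(1+r/\rho(x_0))^\alpha$ when $r$ is replaced by $2r$. For the global piece I would exploit the Schr\"odinger-type kernel estimate of Shen: for every $N>0$ there is $C_N$ such that $|K_\T(x,y)|\le C_N|x-y|^{-n}(1+|x-y|/\rho(x))^{-N}$. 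Splitting $\R^n\setminus 2B=\bigcup_{k\ge1}(2^{k+1}B\setminus 2^k B)$ and using H\"older in the annulus (combined with the $A_p$-condition to pass between $\q$ and Lebesgue measure) one obtains
\[
|\T f_2(x)|\lesssim\sum_{k\ge1}\Big(1+\tfrac{2^k r}{\rho(x_0)}\Big)^{-N}\q(2^{k+1}B)^{-1/p}\|f\|_{L^p(\q,2^{k+1}B)},\qquad x\in B,
\]
(up to $A_p$-constants) which, after inserting the Morrey-norm weights, bounds the global part by a convergent geometric-type sum in $k$, since for each fixed $N$ large enough the factor $(1+2^kr/\rho(x_0))^{-N}$ dominates both the Muckenhoupt growth in $k$ and any polynomial factor $(1+2^kr/\rho(x_0))^\alpha$ arising when replacing $r$ by $2^{k+1}r$ in the Morrey weight.

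Summing the two contributions gives, for every ball $B=B(x_0,r)$,
\[
\Big(1+\tfrac{r}{\rho(x_0)}\Big)^{-\alpha}\q(B)^{-\theta}\|\T f\,\chi_B\|_{L^p(\q)}\lesssim \mathcal{F}(x_0,r),
\]
where $\mathcal{F}(x_0,r)$ is expressed in terms of the analogous quantities for $f$ on balls concentric with $B$ and of radii $2^{k+1}r$. Taking the $L^s_{x_0}$-norm and the supremum in $r$, the definition of $\|\cdot\|_{\Mor}$ closes the estimate and yields \eqref{Ri_bound}.

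The main obstacle I expect is in the global piece: one has to balance the polynomial-type Morrey factors $(1+2^k r/\rho(x_0))^\alpha$ (appearing on the right-hand side when enlarging the ball by $2^{k+1}$) and the $\q$-doubling constants $\q(2^{k+1}B)/\q(B)$ against the decay $(1+2^k r/\rho(x_0))^{-N}$ coming from Shen's kernel estimate, choosing $N$ sufficiently large depending on $\alpha$, $\theta$, $p$ and the $A_p$-constant of $\q$ so that the series in $k$ converges uniformly in $x_0$ and $r$, which then allows the $L^s_{x_0}$-norm and the supremum in $r$ to be taken termwise.
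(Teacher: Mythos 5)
Your overall strategy coincides with the paper's: the same splitting $f=f\chi_{2B}+f\chi_{(2B)^c}$, the weighted $L^p(\q)$ boundedness of $\T$ for the local piece, Shen's kernel decay $|\K(x,z)|\ls |x-z|^{-n}(1+|x-z|/\rho(x))^{-N}$ for the tail, H\"older plus the $A_p$ condition to convert annulus averages into $\q(2^{k}B)^{-1/p}\|f\chi_{2^{k}B}\|_{L^p(\q)}$, and finally insertion of the Morrey weights and summation in $k$. This is exactly the content of the paper's Lemmas~\ref{ntd}, \ref{ntd1} and the proof of Theorem~\ref{theo1}, so as a plan it is the right one. (Two small technical points you elide, both handled by Lemma~\ref{ntd3}: passing from $\rho(x)$, $x\in B$, to $\rho(x_0)$ costs the exponent $N\mapsto N/(N_0+1)$ via Shen's comparison \eqref{lem:Shen_eq2}, and the replacement $r\mapsto 2r$ in the factor $(1+r/\rho(x_0))^{\alpha}$ in the local piece is \eqref{est:rho1}.)

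There is, however, one genuine flaw in your argument for the convergence of the series over annuli. You assert that the factor $\bigl(1+2^{k}r/\rho(x_0)\bigr)^{-N}$ ``dominates the Muckenhoupt growth in $k$'' and makes the sum geometric. It cannot: when $r\ll\rho(x_0)$ this factor is comparable to $1$ for all $k$ with $2^{k}r\le\rho(x_0)$, and the number of such $k$ is unbounded as $r\to0$, so no choice of $N$ yields decay in $k$ uniformly in $r$. In the paper the kernel decay is used \emph{only} to absorb the mismatch between $(1+r/\rho(x_0))^{\alpha}$ and $(1+2^{k}r/\rho(x_0))^{\alpha}$ (estimate \eqref{est:rho3}, which is why $m>(N_0+1)(|\alpha|+\alpha)$ is required); the actual summability comes from a different source, namely the $A_\infty$ volume-ratio estimate \eqref{nv}: writing the $k$-th term so that the quantity $(1+2^{k}r/\rho(x_0))^{\alpha}\q(2^{k}B)^{-\theta}\|f\chi_{2^{k}B}\|_{L^p(\q)}$ (controlled by $\|f\|_{\Mor}$ after taking the $L^s$-norm and supremum) is isolated, the leftover factor is $\bigl(\q(B)/\q(2^{k}B)\bigr)^{1/p-\theta}\ls 2^{-kn\delta(1/p-\theta)}$, which is geometric precisely because $\theta<1/p$. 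So the ratio $\q(2^{k}B)/\q(B)$ is not an enemy to be dominated but the very mechanism of convergence; your proof as written would stall at this step, and repairing it requires invoking \eqref{nv} together with the hypothesis $\theta\in[0,1/p)$, exactly as in Lemma~\ref{ntd1}.
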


\begin{theorem}\label{thm1}
Let $1<p<s\leq \vc ,\ \alpha \in \R,\   \theta \in \left[ 0,1/p\right), \ \q \in A_{p}$.
Assume that $\V\in RH^\star_{n/2}$ and $f\in \mathbb{M}^{p,s}_{\alpha,\theta}(\q)$. Let $u$ be a solution to the following equation 
\begin{equation}\label{equ1}
(-\Del +\V)u=f,
\end{equation}
then there exists a positive constant $C$ such that
	\begin{equation*}
	\Vert D^2 u \Vert _{\mathbb{M}^{p,s}_{\alpha,\theta}(\q)}\leq C\left\Vert f\right\Vert _{\mathbb{M}^{p,s}_{\alpha,\theta}(\q)}.
	\end{equation*}
\end{theorem}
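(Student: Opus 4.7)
The plan is to reduce Theorem \ref{thm1} directly to Theorem \ref{theo1} by observing that the $\La$-Riesz transform $\Ri=D^2\La^{-1}$ is precisely the operator that maps the right-hand side $f$ to the Hessian $D^2 u$ of the solution. So my intended proof is essentially a one-line reduction, with the only work being to justify that reduction rigorously in the class of solutions being considered.

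First I would interpret the equation \eqref{equ1} through the solution operator: since $\La=-\Del+\V$ and $(\La u)=f$, we may formally write $u=\La^{-1}f$. Applying $D^2$ then gives
\begin{equation*}
D^2 u \;=\; D^2\La^{-1}f \;=\; \Ri f.
\end{equation*}
At this point Theorem \ref{theo1} immediately yields
\begin{equation*}
\|D^2 u\|_{\mathbb{M}^{p,s}_{\alpha,\theta}(\q)} \;=\; \|\Ri f\|_{\mathbb{M}^{p,s}_{\alpha,\theta}(\q)} \;\ls\; \|f\|_{\mathbb{M}^{p,s}_{\alpha,\theta}(\q)},
\end{equation*}
under the standing hypotheses $1<p<s\leq\vc$, $\alpha\in\R$, $\theta\in[0,1/p)$, $\q\in A_p$ and $\V\in RH^\star_{n/2}$, which are shared with Theorem \ref{theo1}.

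The step requiring some care is identifying $u$ with $\La^{-1}f$. Since $f$ lies in a Morrey-type space built over a weighted $L^p$ and $\La$ is a nonnegative self-adjoint operator on $L^2(\R^n)$ with form domain containing $H^1(\R^n)$, a natural argument is to first approximate $f$ by a sequence $\{f_k\}\subset C_c^\infty(\R^n)$, set $u_k=\La^{-1}f_k$ (which is unambiguous and belongs to the domain of $D^2\La^{-1}$), apply the Riesz bound to get a Cauchy sequence $\{D^2 u_k\}$ in $\Mor$, and finally identify the limit with $D^2 u$ using standard distributional uniqueness for the Schr\"odinger operator (here one uses $\V\geq 0$). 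The main obstacle I anticipate is making this approximation/identification clean in the Morrey setting, where the usual density of $C_c^\infty$ in $L^p(\q)$ does not automatically pass to $\Mor$; but since we only need density in order to represent $u$ via the solution operator and the final norm bound is a consequence of Theorem \ref{theo1}, this is more of a technical issue than a substantive difficulty, and the full proof is therefore a short corollary.
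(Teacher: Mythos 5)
Your reduction is exactly the paper's proof: the authors simply note that $D^2u=\Ri f$ and invoke Theorem~\ref{theo1}, with no further justification. Your additional remarks on identifying $u$ with $\La^{-1}f$ are more careful than the paper itself, but the substance of the argument is the same.
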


In the second goal of this paper, we prove the boundedness property of the $\La$-fractional Riesz transform $\p$ in the new weighted Morrey space $\mathbb{M}^{p,s}_{\alpha,\theta}(\q^p,\q^q)$, where the weight $\q$ belongs to $A_{p,q}$. Finally, we establish the regularity of solutions to Schr\"odinger type equations $(-\Delta +\V)^{\beta}u=f$ in this space.

\begin{theorem}\label{theo3} 
Let $\alpha \in \R$, $\beta \in \left( \frac{1}{2},\frac{n+1}{2}\right)$, $1<p<\frac{n}{2\beta-1}$, $1<s\leq \vc$ and
	$$	\frac{1}{q}=\frac{1}{p}-\frac{2\beta-1}{n},\, \theta \in \left[ 0,1/q\right), \, \q \in A_{p,q}.$$ 
	For any $\V \in RH_{n/2}$, the $\La$-fractional Riesz transform $\p$ is bounded from $ \mathbb{M}^{p,s}_{\alpha,\theta}(\q^p,\q^q)$ into $\mathbb{M}^{q,s}_{\alpha,\theta}(\q^q)$, i.e., there exists a positive constant $C$ such that
	\begin{align}\label{eq:main2}
	\|\p f\|_{\mathbb{M}^{q,s}_{\alpha,\theta}(\q^q)} \le C \|f\|_{\mathbb{M}^{p,s}_{\alpha,\theta}(\q^p,\q^q)},
	\end{align}
	for all $f \in \mathbb{M}^{p,s}_{\alpha,\theta}(\q^p,\q^q)$.
\end{theorem}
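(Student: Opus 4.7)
The plan is to adapt the argument used for Theorem \ref{theo1} to the fractional setting, fixing a ball $B = B(x_0, r)$ and splitting $f = f_1 + f_2$ with $f_1 = f \chi_{2B}$ and $f_2 = f \chi_{\R^n \setminus 2B}$, so that
\begin{equation*}
\|\p f \cdot \chi_B\|_{L^q(\q^q)} \le \|\p f_1 \cdot \chi_B\|_{L^q(\q^q)} + \|\p f_2 \cdot \chi_B\|_{L^q(\q^q)}.
\end{equation*}
For the local part, I would invoke the $L^p(\q^p) \to L^q(\q^q)$ boundedness of $\p$ under the assumption $\q \in A_{p,q}$ and $\V \in RH_{n/2}$ (a weighted variant of Sugano's result, which is available in the literature on Schrödinger fractional integrals); this gives
\begin{equation*}
\|\p f_1 \cdot \chi_B\|_{L^q(\q^q)} \lesssim \|f \chi_{2B}\|_{L^p(\q^p)}.
\end{equation*}

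For the tail $\p f_2$, the key input is a pointwise estimate on the kernel $K_\beta(x,y)$ of $D \La^{-\beta}$, namely
\begin{equation*}
|K_\beta(x,y)| \lesssim \frac{1}{(1 + |x-y|/\rho(x))^N} \cdot \frac{1}{|x-y|^{n-(2\beta-1)}}
\end{equation*}
for any large $N$, which follows from the heat-kernel bounds associated to $\La$ (using $\V \in RH_{n/2}$ and Shen's estimates on $\rho$). Decomposing $\R^n \setminus 2B$ into dyadic annuli $2^{k+1}B \setminus 2^k B$, applying Hölder's inequality on each annulus with the pair $(p,q)$, and collecting the decay factors coming from the critical-function weight, I would arrive at an estimate of the form
\begin{equation*}
|\p f_2(x)| \lesssim \sum_{k \ge 1} \frac{1}{(1 + 2^k r/\rho(x_0))^{N'}} |2^k B|^{1/q - 1/p} \|f \chi_{2^{k+1} B}\|_{L^p(\q^p)} \cdot (\text{weight factor}),
\end{equation*}
uniformly for $x \in B$, where the $A_{p,q}$ condition on $\q$ is used to pass between unweighted and weighted measures of $2^k B$.

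Taking the $L^q(\q^q)$ norm over $B$, multiplying by $(1 + r/\rho(x_0))^{-\alpha} \q^q(B)^{-\theta}$, and finally raising to the $s$-th power and integrating in $x_0$, each term in the annular sum is controlled by $\|f\|_{\mathbb{M}^{p,s}_{\alpha,\theta}(\q^p, \q^q)}$, and the strong decay $(1 + 2^k r/\rho(x_0))^{-N'}$ guarantees summability in $k$ (this is the standard mechanism for absorbing the shift from the ball $B$ to the concentric dilate $2^{k+1}B$ in the outer $L^s$ norm, via a Minkowski/triangle-inequality manipulation in $x_0$). Combined with the local estimate, this yields \eqref{eq:main2}.

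The main obstacle I expect is the interplay between three different scales of averaging: the inner weighted $L^q(\q^q)$ norm on $B$, the critical function factor $(1 + r/\rho(x_0))^\alpha$ (which must absorb the dyadic blow-up $2^k$ via a large enough choice of $N$), and the outer $L^s$ integration in $x_0$. Concretely, verifying that the annular estimate survives the outer $L^s$ integration requires showing that for each fixed $k$, the map
\begin{equation*}
x_0 \mapsto \frac{\q^q(B(x_0, r))^{-\theta}}{(1 + r/\rho(x_0))^\alpha} \|f \chi_{2^{k+1} B(x_0, r)}\|_{L^p(\q^p)}
\end{equation*}
is controlled in $L^s(dx_0)$ by the Morrey norm of $f$ with a geometric-in-$k$ constant; this uses the doubling property of the $A_{p,q}$ weight together with the standard comparison $\rho(x) \sim \rho(x_0)$ on $2^{k+1}B$ modulated by a polynomial factor in $2^k r/\rho(x_0)$. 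Once this bookkeeping is done carefully, summing over $k$ completes the proof.
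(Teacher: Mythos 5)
Your proposal follows essentially the same route as the paper: the splitting $f=f_1+f_2$ on $2B$, the weighted $L^p(\q^p)\to L^q(\q^q)$ bound for the local part, the kernel decay $|\K_\beta(x,y)|\lesssim (1+|x-y|/\rho(x))^{-m}|x-y|^{-(n-2\beta+1)}$ with dyadic annuli and H\"older plus the $A_{p,q}$ condition for the tail, and summation in $k$ via the decay in $(1+2^kr/\rho)$ together with the doubling comparison $\q^q(B)/\q^q(2^kB)$. The only simplification you are missing is that since the dilated balls $2^{k+1}B$ are concentric with $B$, the outer $L^s$ integration is handled directly by the supremum over all radii in the Morrey norm, so no Minkowski-type manipulation is needed.
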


\begin{theorem}\label{thm2}
Let $\alpha \in \R , \ \beta \in \left( \frac{1}{2},1\right),\ 1<p<\frac{n}{2\beta-1},\ 1<s\leq \vc$ and
$$	\frac{1}{q}=\frac{1}{p}-\frac{2\beta-1}{n},\ \theta \in \left[ 0,1/q\right),\ \q \in A_{ p,q }.$$ 
Assume that $\V\in RH_{n/2}$ and $f\in \mathbb{M}^{p,s}_{\alpha,\theta}(\q^p,\q^q)$. Let $u$ be a solution to the following equation 
\begin{equation}\label{equ2}
(-\Delta +\V)^{\beta}u=f,
\end{equation}
then there exists a positive constant $C$ such that
	\begin{equation*}
	\left\Vert D u\right\Vert _{\mathbb{M}^{q,s}_{\alpha,\theta}(\q^q)}\leq
	C\left\Vert f\right\Vert _{\mathbb{M}^{p,s}_{\alpha,\theta}(\q^p,\q^q)}.
	\end{equation*}
\end{theorem}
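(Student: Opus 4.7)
The strategy is to reduce Theorem \ref{thm2} to the boundedness statement already proved in Theorem \ref{theo3}. By definition, the $\mathcal{L}$-fractional Riesz transform is $\p = D\La^{-\beta}$, so once we can write the solution $u$ of \eqref{equ2} as $u = \La^{-\beta} f$ in an appropriate sense, differentiating gives $Du = \p f$ and the desired estimate follows immediately from \eqref{eq:main2}.

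More concretely, my plan is the following. First, I would recall that because $\V \ge 0$ and $\V \in RH_{n/2}$, the Schr\"odinger operator $\La = -\Delta + \V$ is a nonnegative self-adjoint operator on $L^2(\R^n)$, so the fractional power $\La^\beta$ is well defined via the spectral theorem and admits a bounded inverse $\La^{-\beta}$ on its range. Since $\beta \in (1/2, 1) \subset (1/2, (n+1)/2)$ and the exponents $p, q, s, \theta, \alpha$ and the weight $\q$ satisfy exactly the hypotheses of Theorem \ref{theo3}, the operator $\p = D\La^{-\beta}$ maps $\mathbb{M}^{p,s}_{\alpha,\theta}(\q^p,\q^q)$ boundedly into $\mathbb{M}^{q,s}_{\alpha,\theta}(\q^q)$.

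Second, given $f \in \mathbb{M}^{p,s}_{\alpha,\theta}(\q^p,\q^q)$ and a solution $u$ of \eqref{equ2}, I would write $u = \La^{-\beta} f$ and therefore
\begin{equation*}
Du = D \La^{-\beta} f = \p f.
\end{equation*}
Applying the bound \eqref{eq:main2} from Theorem \ref{theo3} then yields
\begin{equation*}
\|Du\|_{\mathbb{M}^{q,s}_{\alpha,\theta}(\q^q)} = \|\p f\|_{\mathbb{M}^{q,s}_{\alpha,\theta}(\q^q)} \le C \|f\|_{\mathbb{M}^{p,s}_{\alpha,\theta}(\q^p,\q^q)},
\end{equation*}
which is exactly the conclusion of Theorem \ref{thm2}.

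The only nontrivial point in this plan is justifying the identification $u = \La^{-\beta}f$ in the Morrey setting; the main obstacle is therefore not an estimate but rather a well-posedness/uniqueness remark, namely that among solutions of \eqref{equ2} the one represented by $\La^{-\beta}f$ is the relevant one. This can be handled by the standard reduction: since $f \in \mathbb{M}^{p,s}_{\alpha,\theta}(\q^p,\q^q)$ lies in $L^p_{\mathrm{loc}}$, one may interpret \eqref{equ2} via the functional calculus on $L^2$ (approximating $f$ and using density), invert $\La^\beta$ there, and then transfer the estimate back to the Morrey scale by the a priori bound from Theorem \ref{theo3}. Once this identification is in place, the theorem is a one-line consequence of the boundedness of $\p$.
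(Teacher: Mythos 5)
Your proposal is correct and follows exactly the paper's own argument: the paper's proof of Theorem~\ref{thm2} consists of the single observation that $Du=\p f$ and an appeal to the boundedness of $\p$ in Theorem~\ref{theo3}. Your additional remarks on justifying the identification $u=\La^{-\beta}f$ go beyond what the paper records, but they do not change the route.
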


The rest of the paper is organized as follows. In the next section, we present some standard notations and definitions of Muckenhoupt weights and reverse H\"{o}lder classes. We then recall some basic and useful properties of these classes for the convenience of the reader. Moreover, a new generalized weighted Morrey space is also introduced in this section. In Section~\ref{sec:R-trans}, we first prove the boundedness of the $\La$-Riesz transforms $\mathscr{R}_{\La}$ in the weighted Morrey space $\Mor$. Then we apply this boundedness property to get the regularity result for Schr\"{o}dinger type equation~\eqref{equ1}. In the last section, we provide the boundedness of the $\La$-fractional Riesz transforms $\mathscr{R}^{\La}_\beta$ in generalized weighted Morrey space $\Mo$. Finally, we obtain the regularity result for Schr\"{o}dinger type equation~\eqref{equ2} by using the boundedness of the $\La$-fractional Riesz transform $\mathscr{R}^{\La}_\beta$.

\section{Preliminaries}\label{sec:pre}
\subsection{Notations}
We first introduce some nations that we use throughout the paper. For $1\leq p\leq \infty $, we denote by $p^{\prime }$ the H\"older conjugate exponent of $p$, i.e., $ \frac{1}{p} + \frac{1}{p^\prime} = 1.$

Notation $B(x,r)$ denotes a open ball in $\mathbb{R}^n$ with radius $r>0$ and centered at $x \in \mathbb{R}^n$. For each ball $B = B\left(x,r\right)$ in  $\mathbb{R}^{n}$ and for any $\lambda>0$, we set $\lambda B:=B\left( x,\lambda r\right)$, $S_{0}\left( B\right)=B$ and $S_{j}\left( B\right) =2^{j}B\backslash 2^{j-1}B$ for any $j\in \mathbb{N}^*$. 

We denote by $E^c$ and $\chi _{E}$ the complement of the set $E$ in $\mathbb{R}^n$ and its characteristic function, respectively. The average integral of a function $f$ in a measurable subset $E$ of $\mathbb{R}^n$ is defined by
\begin{align*}
\fint_E f(x) dx = \frac{1}{|E|} \int_E f(x) dx,
\end{align*} 
where $|E|$ denotes the Lebesgue measure of $E$.

For a weight $\q,$ we mean that $\q$ is a non-negative measurable and locally integrable function on $\mathbb{R}^n$. For any measurable set $E\subset \mathbb{R}^{n}$ and the weight $\q$, we denote 
$$\q(E):=\int_{E}\q(x)dx.$$

\subsection{Muckenhoupt weights and reverse H\"{o}lder classes}
In this subsection, we first recall the definitions of Muckenhoupt weights $A_{p}$ and the reverse H\"{o}lder classes $RH_{q}$. Then we present some known properties of them which are useful for our results.
\begin{definition}\label{def:Muck}
 For $1<p<\infty $, we say that $\q\in A_{p}$ if there exists a positive constant $C$ such that
\begin{equation*}
\Big( \fint\limits_{B}\q(x)dx\Big) \Big( \fint\limits_{B}\q^{-1/(p-1)}(x)dx\Big)^{p-1}\leq C,
\end{equation*}
for all ball $B$ in $\mathbb{R}^n$. 

For the case $p=1$, we say that $\q \in A_{1}$ if there exists a positive constant $C$
such that for all balls $B\subset \mathbb{R}^n$, 
\begin{equation*}
\fint\limits_{B}\q(y)dy\leq C\q(x),
\end{equation*}%
for a.e. $x\in B$. 

Moreover, we set $A_{\infty }=\bigcup_{p\in \left[ 1,\infty \right) }A_{p}$.
\end{definition}

\begin{definition}\label{def:RH}
For some $1<q<\infty$, we say that the weight $\q$ belongs to reverse H\"{o}lder class $RH_{q}$ if there exists a positive constant $C$ such that for all balls $B\subset X$, 
\begin{equation*}
\Big( \fint\limits_{B}\q^{q}(x)dx\Big)^{1/q}\leq C\fint\limits_{B}\q(x)dx.
\end{equation*}

When $q=\infty $, we say that $\q\in RH_{\infty }$ if there exists a constant $C>0 $ such that for all balls $B\subset X$, 
\begin{equation*}
\q(x)\leq C\fint\limits_{B}\q(y)dy,
\end{equation*}%
for almost everywhere $x\in B$.
\end{definition}

For $\sigma \geq 1$, we say that the weight $\q$ belongs to $D_{\sigma }$ if there exists a constant $C>0$ such that 
\begin{equation}\label{eq:doubling}
\q(tB)\leq Ct^{n\sigma }\q(B)\text{ for all }t\geq 1.
\end{equation}
We note that $\q\in A_{p}$ implies that $\q\in D_{p}.$


Let us introduce a positive function $\rho: \ \mathbb{R}^n \rightarrow \mathbb{R}$ defined by:
\begin{equation}\label{def:rho}
\rho \left( x\right) = \sup \Bigg\{ r>0: \ \frac{1}{r^{n-2}}\int\limits_{B\left( x,r\right) }\V(y) dy\leq
1\Bigg\}, \quad x \in \mathbb{R}^n.
\end{equation}

\begin{definition} We define by $RH^{\star}_{n/2}$ the set of all functions $\V\in RH_{n/2},$ such that there exists a positive constant $C$ not depending to $\V$ such that
\begin{align*}
|D \V(x)| \leq C \rho^{-3}(x) \ \mbox{ and } \ |D^2 \V(x)| \leq C \rho^{-4}(x),
\end{align*}
for all $x\in \R^n$. Here $Du$ and $D^2u$ denote the gradient and the Hessian matrix of a function $u$ respectively. 
\end{definition}

\begin{remark} We remark that
\begin{itemize}
\item[i)] if $|D \V(x)|\leq C \rho^{-3}(x)$ then $\V(x)\leq C \rho^{-2}(x)$ (see \cite{ks2000,Shen1996}); 
\item[ii)] and if
$\V(x)=|P(x)|^\alpha$, where  $\alpha>0$ and $P(x)$ is a polynomial, then $\V \in RH^{\star}_{n/2}$.
\end{itemize}	
\end{remark}

We now recall an important property of the auxiliary function $\rho\left( x\right)$ in the following lemma.

\begin{lemma}[see \cite{Shen1995}]
	\label{lem1} Let $\V \in RH_{q}$ with $q\geq \frac{n}{2}.$ Then there
	exists a positive constant $C$  such that	
		\begin{align}\label{lem:Shen_eq1}
		\rho\left( x\right) \sim \rho \left( y\right) \ \mbox{ if } \ \left\vert
		x-y\right\vert \leq C \rho(x).
		\end{align}
		Moreover, there exists $N_0 \in \mathbb{N}$ such that		
		\begin{align}\label{lem:Shen_eq2}
		C^{-1}\rho(x) \Big(1+\frac{|x-y|}{\rho(x)}\Big)^{-N_0} \leq \rho(y) \leq C\rho(x) \Big(1+\frac{|x-y|}{\rho(x)}\Big)^{\frac{N_0}{N_0+1}}
		\end{align}
		for all $x,y \in \mathbb{R}^n$.
\end{lemma}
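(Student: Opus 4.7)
The plan is to study the auxiliary quantity
$$\psi(x, r) := \frac{1}{r^{n-2}} \int_{B(x,r)} \V(y)\, dy,$$
whose level set $\{r : \psi(x, r) = 1\}$ pins down $\rho(x)$. Because $\V \geq 0$, the map $r \mapsto \psi(x, r)$ is continuous, vanishes at $0^+$, and tends to infinity as $r \to \infty$, so $\rho(x)$ is characterized by $\psi(x, \rho(x)) = 1$.

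The first step is a quantitative monotonicity of $\psi$ in $r$: for $0 < r_1 < r_2$,
$$c \Big(\frac{r_2}{r_1}\Big)^{2 - n/q} \psi(x, r_1) \;\leq\; \psi(x, r_2) \;\leq\; C \Big(\frac{r_2}{r_1}\Big)^{n_0} \psi(x, r_1),$$
where $n_0$ is a doubling exponent of the measure $\V(y)\, dy$. The essential lower bound is obtained by H\"older's inequality on $B(x, r_1)$, extending the resulting $L^q$ integral to $B(x, r_2)$, and then applying $\V \in \rv_q$ on the larger ball to convert the $L^q$ average back to an $L^1$ average. Since $q \geq n/2$ gives $2 - n/q > 0$, this forces $\psi(x, \cdot)$ to be strictly increasing in a scale-invariant sense.

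For the local comparison \eqref{lem:Shen_eq1}, assume $|x-y| \leq c\rho(x)$ with $c$ small. The inclusions $B(y, (1-c)\rho(x)) \subset B(x, \rho(x)) \subset B(y, (1+c)\rho(x))$ give $\psi(y, (1+c)\rho(x)) \gtrsim 1$ and $\psi(y, (1-c)\rho(x)) \ls 1$, and feeding these bounds into the quantitative monotonicity applied at $y$ pins $\rho(y)$ to within a constant multiple of $\rho(x)$.

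For \eqref{lem:Shen_eq2}, I would prove the upper bound by iterating \eqref{lem:Shen_eq1} along a chain of points $x = x_0, x_1, \ldots, x_k = y$ whose consecutive spacing is comparable to the running value of $\rho(x_j)$. Optimizing the chain length against the scaling exponents from the first step extracts the sublinear exponent $N_0/(N_0+1)$, with $N_0$ encoding the doubling order of $\V\, dy$. Once the upper bound is in hand, the lower bound follows by swapping the roles of $x$ and $y$ and solving the resulting self-referential inequality for $\rho(y)$; the exponent deteriorates in this step from $N_0/(N_0+1)$ to $N_0$, which accounts for the asymmetric pair in \eqref{lem:Shen_eq2}. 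The main obstacle is the strictly sublinear exponent $N_0/(N_0+1)$ in the upper bound: a naive chain argument yields only linear growth $\rho(y) \leq C\rho(x)(1 + |x-y|/\rho(x))$, which would be useless for the pointwise kernel estimates used later. The improvement requires combining the self-improving nature of reverse H\"older (so one may assume $\V \in \rv_{q + \varepsilon}$) with a careful optimization of the step sizes in the chain, and this is the technically heaviest portion of Shen's original argument.
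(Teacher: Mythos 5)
The paper offers no proof of this lemma at all --- it is quoted verbatim from Shen \cite{Shen1995} (his Lemma 1.4) --- so your sketch has to be measured against Shen's argument. The first half of your proposal is sound and is essentially his: the scale comparison $\psi(x,r_1)\leq C(r_1/r_2)^{2-n/q}\psi(x,r_2)$ for $r_1<r_2$ (H\"older on the small ball, reverse H\"older on the large one) is exactly Shen's Lemma 1.3, doubling of the measure $\V\,dy$ gives the companion upper bound, and the ball-inclusion argument for \eqref{lem:Shen_eq1} is correct. (Minor point: for $q=n/2$ the exponent $2-n/q$ vanishes, so the self-improvement $RH_{n/2}\subset RH_{n/2+\epsilon}$ is needed already here, not only in the last step.)

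The gap is in \eqref{lem:Shen_eq2}: your bootstrap runs in the wrong direction, and the step you would have to do directly is precisely the one that cannot be done directly. You propose to establish the sublinear upper bound $\rho(y)\leq C\rho(x)(1+|x-y|/\rho(x))^{N_0/(N_0+1)}$ first, by optimizing a chain, and then to symmetrize. But, as you yourself concede, chaining (and likewise the direct integral comparison, which lower-bounds $\psi(y,r')$ via $\int_{B(x,\cdot)}\V$ only when $r'\gtrsim|x-y|$) yields nothing better than the linear bound $\rho(y)\leq C(\rho(x)+|x-y|)$; and a linear bound is exactly the one the symmetrization algebra cannot improve --- with exponent $\gamma=1$ the swapped inequality $v\leq Cu(1+v)$ gives no control on $v$ at all, so no amount of ``optimizing the step sizes'' rescues the scheme. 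Shen's actual route is the reverse. One first proves \emph{directly} the polynomial lower bound $\rho(y)\geq C^{-1}\rho(x)(1+|x-y|/\rho(x))^{-N_0}$: for $r'\leq\rho(x)\leq|x-y|$ one has $B(y,r')\subset B(x,3|x-y|)$, and combining Lemma 1.3 on $B(y,\cdot)$ with doubling on $B(x,\cdot)$ gives $\psi(y,r')\leq C(r'/\rho(x))^{2-n/q}(|x-y|/\rho(x))^{\mu-n+n/q}$, which is $\leq 1$ once $r'\leq c\rho(x)(|x-y|/\rho(x))^{-N_0}$ with $N_0=(\mu-n+n/q)/(2-n/q)$; no chain is involved. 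Only then does one swap $x$ and $y$: setting $s=|x-y|/\rho(x)$ and $t=|x-y|/\rho(y)$, the swapped bound reads $s\leq Ct(1+t)^{N_0}$, whence $t\geq c\,s^{1/(N_0+1)}$ for large $s$, which is the sublinear upper bound on $\rho(y)$. So your ingredients (Lemma 1.3, doubling, the self-referential algebra) are the right ones, but as assembled your proof has an unfilled hole exactly at its hardest point; reordering it as above closes the gap.
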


\begin{lemma}\label{ntd3}
	Let $\alpha \in \R$, the ball $B=B(y,r)$ in $\mathbb{R}^n$ and the function $\rho$ is given by~\eqref{def:rho}. There holds
	\begin{align}\label{est:rho1}
	(1+r\rho^{-1}(y))^\alpha \ls (1+2r\rho^{-1}(y))^\alpha.
	\end{align}
	 Moreover, for all $m\in \N$, $x\in B$ and $z \in S_j(B)$, there exists $N_0 \in \mathbb{N}$ such that
	\begin{align}\label{est:rho2}
	\left( 1+\left\vert x-z\right\vert\rho^{-1}(x) \right) ^{-m} \ls \left( 1+2^{j}r\rh \right) ^{-\frac{m}{N_0+1}},
	\end{align}
	and
	\begin{align}\label{est:rho3}
	\dfrac{(1+r\rho^{-1}(y))^\alpha}{(1+2^jr\rho^{-1}(y))^{\frac{m}{N_0+1}}}\ls (1+2^jr\rho^{-1}(y))^\alpha, 
	\end{align}
	for every $j \in \N$ and $m>(N_0+1)(|\alpha|+\alpha)$.
\end{lemma}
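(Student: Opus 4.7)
The plan is to treat the three inequalities in turn by elementary manipulations, the only nontrivial input being the comparison \eqref{lem:Shen_eq2} from Lemma \ref{lem1}.

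For \eqref{est:rho1}, I would split on the sign of $\alpha$. When $\alpha \geq 0$ the monotonicity $1+r\rho^{-1}(y)\le 1+2r\rho^{-1}(y)$ yields the bound with constant one. When $\alpha<0$ the map $t\mapsto t^\alpha$ is decreasing, so coupling this with $1+2r\rho^{-1}(y)\le 2(1+r\rho^{-1}(y))$ produces $(1+r\rho^{-1}(y))^\alpha\le 2^{-\alpha}(1+2r\rho^{-1}(y))^\alpha$, which is exactly the claim.

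For \eqref{est:rho2}, my strategy is to first prove the pointwise comparison
\begin{equation*}
1+2^j r\rho^{-1}(y)\ls \bigl(1+|x-z|\rho^{-1}(x)\bigr)^{N_0+1},
\end{equation*}
from which the statement follows on raising both sides to the power $-m/(N_0+1)$ (the direction flips correctly since the exponent is negative). Starting from the upper half of \eqref{lem:Shen_eq2}, which after inversion reads $\rho^{-1}(y)\le C\rho^{-1}(x)(1+|x-y|/\rho(x))^{N_0}$, and using $|x-y|\le r$, I obtain
\begin{equation*}
2^j r\rho^{-1}(y)\le C\bigl(2^j r\rho^{-1}(x)\bigr)\bigl(1+2^j r\rho^{-1}(x)\bigr)^{N_0}\le C\bigl(1+2^j r\rho^{-1}(x)\bigr)^{N_0+1}.
\end{equation*}
To pass from $2^j r\rho^{-1}(x)$ to $|x-z|\rho^{-1}(x)$ I would use the triangle inequality: for $z\in S_j(B)$ with $j\ge 2$, $|x-z|\ge |y-z|-|x-y|\ge(2^{j-1}-1)r$, so $2^j r\ls |x-z|$; for $j=0,1$ both $2^j r$ and $|x-z|$ are at most $2r$, so the factor can be absorbed into the implicit constant.

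For \eqref{est:rho3}, after clearing the denominator the target reads
\begin{equation*}
(1+r\rho^{-1}(y))^\alpha\ls(1+2^j r\rho^{-1}(y))^{\alpha+m/(N_0+1)}.
\end{equation*}
I split again on the sign of $\alpha$. When $\alpha\ge 0$, monotonicity of $(1+t)^\alpha$ combined with $m\ge 0$ suffices. When $\alpha<0$, I would use $1+2^j r\rho^{-1}(y)\le 2^j(1+r\rho^{-1}(y))$ to compare the two factors, gaining a constant $2^{j|\alpha|}$; the hypothesis on $m$ ensures $\alpha+m/(N_0+1)\ge 0$, which lets me absorb this $2^{j|\alpha|}$ into the factor $(1+2^j r\rho^{-1}(y))^{m/(N_0+1)}$.

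The main technical point is \eqref{est:rho2}: the exponent $N_0+1$ on the right-hand side must be produced by pairing the $(1+\cdot)^{N_0}$ factor coming from Shen's comparison with one additional linear factor of $2^j r\rho^{-1}(x)$, so the bookkeeping of powers of $\rho$ versus $\rho^{-1}$ and of $|x-y|$ versus $|x-z|$ is the easiest place to slip up. Once that step is secured, \eqref{est:rho1} and \eqref{est:rho3} reduce to elementary case analyses on the sign of $\alpha$.
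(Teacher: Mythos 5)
Your treatments of \eqref{est:rho1} and \eqref{est:rho2} are essentially the paper's. For \eqref{est:rho2} the paper substitutes the bound $\rho^{-1}(x)\gtrsim\rho^{-1}(y)\left(1+r\rho^{-1}(y)\right)^{-N_0/(N_0+1)}$ coming from \eqref{lem:Shen_eq2} directly into $\left(1+|x-z|\rho^{-1}(x)\right)^{-m}$ and massages the resulting fraction, whereas you prove the reciprocal inequality $1+2^jr\rho^{-1}(y)\ls\left(1+|x-z|\rho^{-1}(x)\right)^{N_0+1}$ and raise it to the power $-m/(N_0+1)$; the two computations are equivalent. Two small slips there: the bound $\rho^{-1}(y)\le C\rho^{-1}(x)(1+|x-y|/\rho(x))^{N_0}$ comes from inverting the \emph{lower} estimate in \eqref{lem:Shen_eq2}, not the upper one; and your claim that $j=0,1$ can be absorbed into the implicit constant is not right as stated, since for $z\in S_1(B)$ the distance $|x-z|$ can be arbitrarily small while $2r\rho^{-1}(y)$ is large, so the comparison $|x-z|\sim 2^jr$ (hence \eqref{est:rho2}) genuinely requires $j\ge 2$. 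The paper makes the same tacit restriction, and it is harmless because the series in the proof of Theorem \ref{theo1} starts at $j=2$.

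The genuine gap is in the case $\alpha<0$ of \eqref{est:rho3}. Your final absorption step requires $2^{j|\alpha|}\ls\left(1+2^jr\rho^{-1}(y)\right)^{m/(N_0+1)}$ with a constant independent of $j$, and this is false: taking $r\rho^{-1}(y)=2^{-j}$ the right-hand side stays bounded by a constant depending only on $m$ and $N_0$, while $2^{j|\alpha|}\to\infty$. The underlying problem is that the comparison $1+2^jr\rho^{-1}(y)\le 2^j\left(1+r\rho^{-1}(y)\right)$ loses a factor $2^{j|\alpha|}$ that nothing on the right-hand side can recover. Note also that your claim that the hypothesis forces $\alpha+m/(N_0+1)\ge 0$ is wrong for $\alpha<0$, where $|\alpha|+\alpha=0$ and the hypothesis reads merely $m>0$. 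The paper's route avoids the issue: it uses the crude bound $\left(1+r\rho^{-1}(y)\right)^{\alpha}\le\left(1+2^jr\rho^{-1}(y)\right)^{|\alpha|}$, valid for either sign of $\alpha$ because for $\alpha<0$ the left side is at most $1$, so that the left-hand side of \eqref{est:rho3} is at most $\left(1+2^jr\rho^{-1}(y)\right)^{|\alpha|-m/(N_0+1)}$, and then one only has to check $|\alpha|-m/(N_0+1)\le\alpha$, i.e. $m\ge(N_0+1)(|\alpha|-\alpha)$. (As written the lemma states the condition with $|\alpha|+\alpha$; for $\alpha<0$ the argument actually needs $|\alpha|-\alpha$, and since in all applications $m$ may be taken arbitrarily large this is only a bookkeeping point.) You should replace your $\alpha<0$ branch by this one-line estimate.
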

\begin{proof} 
	It is easy to see that if $\alpha <0$ then 
	\begin{align*}
		\left( 1+r\rh \right) ^{\alpha } = \frac{2^{-\alpha }}{\left(2+2r\rh \right) ^{-\alpha }} \leq \frac{2^{-\alpha }}{\left( 1+2r\rh \right) ^{-\alpha }},
	\end{align*}
	and  $\left( 1+r\rh \right) ^{\alpha }\leq
	\left( 1+2r\rh \right) ^{\alpha },$
	for $\alpha \geq 0$. We thus get for all $\alpha \in \R$,
	\begin{equation*}
	\left( 1+r\rh \right) ^{\alpha }\ls \left(	1+2r\rh \right) ^{\alpha},  
	\end{equation*}
	which leads to~\eqref{est:rho1}. To estimate~\eqref{est:rho2}, we first note that by~\eqref{lem:Shen_eq1} in Lemma~\ref{lem1} for any $x\in B,z\in S_{j}\left( B\right),$ we have $\left\vert x-z\right\vert \sim 2^{j}r$ and $\rho(x)\sim \rho(y)$. Applying~\eqref{lem:Shen_eq2} in Lemma~\ref{lem1}, we then get that	
	\begin{equation*}
	\begin{aligned}
	\left( 1+|x-z|\rho^{-1}(x) \right) ^{-m} &\ls \Big[1+2^{j}r\rh \left( 1+r\rh \right)^{-\frac{N_0}{N_0+1}}\Big] ^{-m}   \\
	& \ls \left[ \frac{\left(1+r\rh\right)^{\frac{N_0}{N_0+1}}}{\left(1+r\rh\right)^{\frac{N_0}{N_0+1}}+2^jr\rh} \right]^m
	\\
	& \ls \left[ \frac{\left(1+2^j r\rh\right)^{\frac{N_0}{N_0+1}}}{1+2^jr\rh} \right]^m
	\\
	&\ls \left( 1+2^{j}r\rh \right) ^{-\frac{m}{N_0+1}},  
	\end{aligned}
	\end{equation*}
	which leads to estimate~\eqref{est:rho2}. Finally, to obtain the inequality~\eqref{est:rho3}, we first remark that
	\begin{equation*}
	\left( 1+r\rh \right) ^{\alpha }\leq \left(	1+2^{j}r\rh \right) ^{\left\vert \alpha \right\vert }
	\end{equation*}
	for all $\alpha \in \R$. It follows that 
	\begin{align}\label{est:rho4}
		\frac{\left( 1+r\rh \right) ^{\alpha }}{\left(1+2^{j}r\rh \right) ^{\frac{m}{N_0+1}}} 
		\leq \frac{\left( 1+2^{j}r\rh \right) ^{\left\vert \alpha \right\vert }}{\left( 1+2^{j}r\rh \right) ^{\frac{m}{N_0+1}}} 
		\leq \frac{\left( 1+2^{j}r\rh \right) ^{\alpha }}{\left( 1+2^{j}r\rh \right) ^{\frac{m}{N_0+1}-\left\vert \alpha \right\vert -\alpha }}.
	\end{align}	
	 By choosing $N_0 \in \mathbb{N}$ such that 
	$$\frac{m}{N_0+1}-\left\vert \alpha \right\vert -\alpha >0,$$
	we deduce estimate~\eqref{est:rho3} from \eqref{est:rho4}. The proof is complete. 	
\end{proof}

Now, we recall some basic properties of the Muckenhoupt weights and the reverse H\"older classes.

\begin{lemma}(See \cite[Lemma 2.1]{Song2010} and \cite[Proposition 7.2.8]{Grafakos1})
	\label{tr1}The following properties hold:	
	\begin{itemize}
		\item[i)] $A_{1}\subset A_{p}\subset A_{q}$ for $1\leq p\leq q\leq \infty $.		
		\item[ii)] $RH_{\infty }\subset RH_{q}\subset RH_{p}$ for $1<p\leq q\leq \infty $.		
		\item[iii)] If $\q \in A_{p},1<p<\infty $, then there exists $\epsilon >0$ such that 
		$\q \in A_{p-\epsilon }$.		
		\item[iv)] If $\q \in RH_{q},1<q<\infty $, then there exists $\epsilon >0$ such
		that $\q \in RH_{q+\epsilon }$.		
		\item[v)] $A_{\infty }=\displaystyle{\bigcup _{1\leq p<\infty }A_{p}}= \displaystyle{\bigcup _{1<p\leq \infty}RH_{p}}.$		
		\item[vi)] There exists $\delta \in \left( 0,1\right) $ such that for any ball $B\subset\mathbb{R}^{n}$ and any measurable subset $E$ of all $B$,
		\begin{equation}\label{nv}
		\frac{\q\left( E\right) }{\q\left( B\right) }\lesssim \Big( \frac{\left\vert
			E\right\vert }{\left\vert B\right\vert }\Big) ^{\delta }  .
		\end{equation}
	\end{itemize}
\end{lemma}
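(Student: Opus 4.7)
The plan is to treat each of the six items separately, since they are classical facts whose proofs are essentially independent. None requires new ideas beyond the standard machinery built from H\"older's inequality, the reverse H\"older self-improvement property of $A_p$-weights, and Gehring's lemma; I would indicate the references (Grafakos, Stein) for the detailed bookkeeping rather than reproduce everything.

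For (i), I would apply H\"older's inequality to the $A_p$ characterization: given $\q\in A_p$ with $p\le q$, write $\q^{-1/(q-1)}=\q^{-1/(p-1)\cdot (p-1)/(q-1)}$ and use H\"older on the ball $B$ to bound $\fint_B \q^{-1/(q-1)}$ by a power of $\fint_B \q^{-1/(p-1)}$; combining with the $A_p$ bound for $\q$ yields the $A_q$ bound. For (ii), the monotonicity of $L^p$-averages in $p$ (again by H\"older) gives $(\fint_B \q^p)^{1/p}\le (\fint_B \q^q)^{1/q}$ for $p\le q$, so the $RH_q$ inequality implies the $RH_p$ inequality with the same constant.

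For (iii), the key tool is the well-known reverse H\"older inequality enjoyed by every $A_p$ weight (Coifman--Fefferman): $\q\in A_p$ implies $\q\in RH_{1+\varepsilon}$ for some $\varepsilon>0$, and a direct manipulation of the self-improved exponent together with H\"older then promotes $\q$ to $A_{p-\epsilon}$ for a small $\epsilon>0$. For (iv), I would invoke Gehring's lemma, which states precisely that the reverse H\"older condition is open in the exponent; alternatively one can derive it from the good-$\lambda$/Calder\'on--Zygmund decomposition applied to $\q$. Item (v) is then immediate: the inclusion $A_\infty\supseteq \bigcup_p A_p$ is the definition, while (iii) and (iv) guarantee that the two unions $\bigcup_p A_p$ and $\bigcup_p RH_p$ coincide, and both equal $A_\infty$.

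The main subtlety, and the one I would spend the most care on, is (vi), the quantitative absolute continuity $\q(E)/\q(B)\lesssim (|E|/|B|)^\delta$. The plan is to exploit the self-improvement from (iii)--(iv): since $\q\in A_\infty$ lies in some $RH_{1+\varepsilon}$, H\"older's inequality on the product $\chi_E\cdot \q$ on the ball $B$ gives
\begin{equation*}
\q(E)=\int_B \chi_E\,\q\,dx\le |E|^{1-1/(1+\varepsilon)}\Big(\int_B \q^{1+\varepsilon}\,dx\Big)^{1/(1+\varepsilon)},
\end{equation*}
and the reverse H\"older inequality bounds the last factor by $|B|^{1/(1+\varepsilon)-1}\q(B)$; rearranging yields \eqref{nv} with $\delta=\varepsilon/(1+\varepsilon)$. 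Apart from this computation, everything else in the lemma is a direct quotation of standard results, so the only ``obstacle'' is remembering to use the sharp exponent produced by Gehring-type self-improvement in order to obtain a quantitative $\delta>0$.
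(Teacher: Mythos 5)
The paper gives no proof of this lemma at all; it is quoted directly from \cite[Lemma 2.1]{Song2010} and \cite[Proposition 7.2.8]{Grafakos1}, so your reconstruction is being compared against the standard arguments in those references rather than against anything in the text. Items (i), (ii), (iv) and (vi) are handled correctly: in particular your computation for (vi), combining H\"older with exponents $1+\varepsilon$ and $(1+\varepsilon)/\varepsilon$ against the $RH_{1+\varepsilon}$ bound to get $\delta=\varepsilon/(1+\varepsilon)$, is exactly the classical proof.

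Two points in your sketch are not quite right as stated. For (iii), applying the Coifman--Fefferman reverse H\"older inequality to $\q$ itself does not lower the exponent $p$; the standard argument applies it to the \emph{dual} weight $\sigma=\q^{-1/(p-1)}\in A_{p'}$, obtaining $\bigl(\fint_B\sigma^{1+\varepsilon}\bigr)^{1/(1+\varepsilon)}\lesssim\fint_B\sigma$, and then chooses $q<p$ with $1/(q-1)=(1+\varepsilon)/(p-1)$ so that the $A_q$ condition for $\q$ follows. Without passing to $\sigma$, the ``direct manipulation'' you allude to has nothing to manipulate. For (v), the equality $\bigcup_{1\le p<\infty}A_p=\bigcup_{1<q\le\infty}RH_q$ does \emph{not} follow from the openness statements (iii) and (iv), which each stay inside their own family; it requires the two crossover implications, namely that every $A_p$ weight lies in some $RH_q$ (Coifman--Fefferman) and, conversely, that every $RH_q$ weight lies in some $A_p$. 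Both are classical but neither is a consequence of (iii)--(iv), so that step of your plan needs to be replaced by an appeal to (or proof of) these two facts.
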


To establish the weighted inequality for fractional integrals, we need to introduce class $A_{p,q}.$ 
\begin{definition}\label{def:Apq}
We say that a weight $\q$ belongs to the class $A_{p,q}$ for  $1\leq p<\infty $ and $1\leq q<\infty $, if there exists a positive constant $C$ such that
\begin{equation*}
\Big( \fint \q^{q}(x)dx\Big) ^{1/q}\Big( \fint \q^{-p^{\prime }}(x)dx\Big)^{1/p^{\prime }}\leq C,
\end{equation*}
 for any ball $B$ in $\mathbb{R}^n$
\end{definition}

We remark that if $\q \in A_{p,q}$ then 
\begin{equation*}
\Big(\fint \q^{q/p}(x)dx\Big) ^{1/q}\Big(\fint \q^{-p^{\prime }/p}(x)dx\Big)^{1/p^{\prime }}\leq C
\end{equation*}%
for any ball $B$ in $\mathbb{R}^n$. The connection of $A_{p,q}$ and $A_{m}$ is also showed in the following lemma.

\begin{lemma}(See \cite[Remark 2.11]{KS2009})
	Let $0<\beta <n$, $1\leq p<\frac{n}{\alpha}$, and $\frac{1}{q} = \frac{1}{p} - \frac{\beta}{n}$. The
	following statements are true:
	
	\begin{itemize}
		\item[i)] For any $p>1$, if $\q \in A_{p,q}$ then $\q^{q}\in A_{q},$ and $%
		\q^{-p^{\prime }}\in A_{p^{\prime }}.$
		
		\item[ii)] $\q \in A_{1,q}$ if and only if $\q^{q}\in A_{1}.$
	\end{itemize}
\end{lemma}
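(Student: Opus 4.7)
The plan is to manipulate the defining integral inequality of $A_{p,q}$ directly, using arithmetic on the exponents and the monotonicity $A_r \subset A_s$ for $r \leq s$ recorded in Lemma~\ref{tr1}(i). The central idea for both parts is to raise the $A_{p,q}$ condition to a suitable power so that the integrands rearrange into a genuine $A_r$ condition on the weight $\q^q$ (respectively $\q^{-p'}$), and then either to conclude directly or to embed into the larger class.

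For part (i), I would begin by raising the $A_{p,q}$ inequality to the $q$-th power, which gives
$$\Big(\fint_B \q^q\,dx\Big)\Big(\fint_B \q^{-p'}\,dx\Big)^{q/p'} \leq C^q.$$
Setting $r = 1 + q/p'$, one checks that $r-1 = q/p'$ and $(\q^q)^{-1/(r-1)} = \q^{-p'}$, so the displayed inequality is exactly the $A_r$ condition on the weight $\q^q$; hence $\q^q \in A_{1+q/p'}$. The hypothesis $\frac{1}{q}=\frac{1}{p}-\frac{\beta}{n}$ with $\beta>0$ forces $q\geq p$, and a short computation gives $1+q/p'\leq q$, so Lemma~\ref{tr1}(i) upgrades this to $\q^q\in A_q$. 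The second assertion $\q^{-p'}\in A_{p'}$ is obtained symmetrically: raise the $A_{p,q}$ inequality to the $p'$-th power instead, read it as the $A_{1+p'/q}$ condition on $\q^{-p'}$, and use $p\leq q$ once more to deduce $1+p'/q\leq p'$, so the inclusion $A_{1+p'/q}\subset A_{p'}$ closes the argument.

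For part (ii), the case $p=1$ is interpreted with $p'=\infty$, under which the $A_{1,q}$ condition reads
$$\Big(\fint_B \q^q(x)\,dx\Big)^{1/q} \leq C\,\q(y) \quad \text{for a.e. } y\in B.$$
Raising both sides to the $q$-th power immediately yields $\fint_B \q^q(x)\,dx \leq C^q\,\q^q(y)$ for a.e. $y\in B$, which is precisely the defining property of $A_1$ for the weight $\q^q$. Each step is reversible, so the equivalence is immediate.

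I do not expect any genuine obstacle here: the whole argument reduces to exponent bookkeeping together with one application of the monotonicity of the $A_m$ scale. The only step deserving a moment of care is the numerical inequality $1+q/p'\leq q$ (and its symmetric counterpart $1+p'/q\leq p'$), which depends crucially on $q\geq p$ coming from the hypothesis $\beta>0$; without this, one would only reach the a priori weaker statement $\q^q\in A_{1+q/p'}$ rather than the target $\q^q\in A_q$.
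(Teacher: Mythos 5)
Your proof is correct, and it is worth noting that the paper itself offers no argument for this lemma: it is simply imported by citation from Komori--Shirai \cite[Remark 2.11]{KS2009}. Your route is the standard direct verification, and every step checks out. Raising the $A_{p,q}$ inequality to the power $q$ and substituting $r=1+q/p'$ does yield exactly the $A_r$ condition for $\q^q$, since $(\q^q)^{-1/(r-1)}=\q^{-p'}$ and the outer exponent $r-1=q/p'$ matches; the symmetric manipulation with the power $p'$ gives $\q^{-p'}\in A_{1+p'/q}$. Your exponent bookkeeping is also right: $1+q/p'=1+q-q/p\leq q$ and $1+p'/q\leq p'$ are each equivalent to $p\leq q$, which follows from $\tfrac1q=\tfrac1p-\tfrac\beta n$ with $\beta>0$, and then Lemma~\ref{tr1}(i) closes the argument. (In fact you prove the slightly sharper statements $\q^q\in A_{1+q/p'}$ and $\q^{-p'}\in A_{1+p'/q}$, which are the standard equivalent reformulations of $\q\in A_{p,q}$; the lemma as stated only needs the weaker conclusions.) Part (ii) is likewise correct once the $p=1$ case of the definition is read, as you do, with the $p'=\infty$ factor interpreted as an essential supremum of $\q^{-1}$ over $B$; the $q$-th power then converts the condition verbatim into the $A_1$ condition for $\q^q$, and reversibility gives the equivalence. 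The only cosmetic caveat is that the hypothesis in the statement contains a typo ($p<n/\alpha$ should read $p<n/\beta$), but this does not affect your argument, which only uses $q>p$.
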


\subsection{A generalized Morrey type space}
Let us now introduce a new Morrey type space which is a generalized version of many well-known Morrey type spaces. 

\begin{definition}\label{morrey1}
	Let $\alpha \in \R , \ \theta \in \left[ 0,1\right),\ 1\leq p<s\leq \vc$ and $\q,\nu \in \Lo(\mathbb{R}^n)$.  We denote by $\Mo$ the space of all measurable functions $f \in L^p_{\operatorname{loc}}(\mathbb{R}^n)$ such that
	\begin{align*}
		\left\Vert f\right\Vert _{\Mo} =\underset{r>0}{\sup }\Big[ \int\limits_{
			\mathbb{R}^{n}}\Big( \left( 1+r \rho^{-1}(x) \right) ^{\alpha }\nu\left(
		B(x,r)\right) ^{-\theta }\left\Vert f \chi _{B(x,r)}\right\Vert _{L^p( \q )
		} \Big) ^{s}dx\Big] ^{1/s}<\vc,
	\end{align*}
	where the function $\rho$ is defined by~\eqref{def:rho}. In the case of $\q \equiv \nu$, we denote $\Mo$ by $\mathbb{M}^{p,s}_{\alpha,\theta}(\q)$ for the simplicity.
\end{definition}

\section{Boundedness of $\La$-Riesz transform}\label{sec:R-trans}

In \cite{Shen1995}, Shen proved that the operator $\Ri$ is a Calder\'on-Zygmund operator if $\V$ is a non negative polynomial and $\Ri$ is bounded in $L^p$ if $\V \in RH_{n/2}$. In this paper, we obtain the general result in the new Morrey space $\Mo$ under the assumption $\V \in RH^\star_{n/2}$. Let us introduce a kernel $\K(x,y)$ associated to operator $\T$ as follows
\begin{align}\label{def:Ri}
\T f(x)=\tp_{\R^n}\K(x,y)f(y)dy.
\end{align}
We next state several lemmas which are useful to prove our main result about the boundedness property of the $\La$-Riesz transform $\Ri$ in Morrey space $\Mo$. The proof of Lemma~\ref{lemma5.1} can be found in \cite[Lemma 3.6]{pt2016}.
\begin{lemma}\label{lemma5.1} Let $\V \in RH^{\star}_{n/2}$. For any $k>0$ there exists a positive constant $C$ such that 	
$$|\K(x,y)| \leq \frac{C}{\Big(1+\frac{|x-y|}{\rho(x)}\Big)^k}\frac{1}{|x-y|^n},$$
for all $x,y\in \R^n,x\neq y.$
\end{lemma}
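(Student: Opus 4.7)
The plan is to realize $\K(x,y)$ as the pointwise Hessian $D_x^2\Gamma(x,y)$ of the fundamental solution $\Gamma$ of $\La=-\Delta+\V$, and then deduce the claimed decay from Shen's sharp size estimate on $\Gamma$ combined with interior elliptic regularity for the equation $\La u=0$ satisfied by $u(z):=\Gamma(z,y)$ on a ball centred at $x$. The algebraic starting point is the Shen bound that, for every $N>0$,
\begin{equation*}
|\Gamma(x,y)|\leq \frac{C_N}{(1+|x-y|/\rho(x))^N}\cdot \frac{1}{|x-y|^{n-2}}, \qquad x\neq y,
\end{equation*}
which holds whenever $\V \in RH_{n/2}$; the desired factor $(1+|x-y|/\rho(x))^{-k}$ will be extracted from this decay with a large margin in $N$.

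Fix $x\neq y$ and set $R=|x-y|/4$. Then $u(z):=\Gamma(z,y)$ solves $-\Delta u = -\V u$ on $B(x,R)$. Applying interior $C^{1,1}$ regularity yields a pointwise bound of the shape
\begin{equation*}
|D^2u(x)|\ls \frac{1}{R^{2}}\|u\|_{L^\infty(B(x,R))}+R^{\gamma}[\V u]_{C^{0,\gamma}(B(x,R))},
\end{equation*}
for some $\gamma\in(0,1)$. The $RH^\star_{n/2}$ assumption supplies the pointwise bounds $\V\leq C\rho^{-2}$, $|D\V|\leq C\rho^{-3}$ and $|D^2\V|\leq C\rho^{-4}$, while Lemmas~\ref{lem1} and~\ref{ntd3} let one replace $\rho(z)$ by $\rho(x)$ throughout $B(x,R)$. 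Combining these inputs with an auxiliary gradient bound $|Du(z)|\ls R^{-1}\|u\|_{L^\infty(B(x,R))}$ (the same elliptic estimate one order lower) controls $[\V u]_{C^{0,\gamma}(B(x,R))}$ by negative powers of $\rho(x)$ times $\|u\|_{L^\infty(B(x,R))}$, and Shen's bound on $\Gamma$ then closes the estimate for arbitrary $k$ upon choosing $N$ large.

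The main obstacle is the second-derivative Schauder step: promoting Shen's $L^\infty$ bound on $\Gamma$ to a pointwise Hessian bound demands H\"older control of the source $\V u$, which is precisely the reason the strengthened hypothesis $\V\in RH^\star_{n/2}$ (pointwise bounds on $D\V$ and $D^2\V$) is imposed rather than the bare $\V\in RH_{n/2}$ used by Shen. A cleaner bootstrap proceeds in two stages: first derive $|D_x\Gamma(x,y)|\ls (1+|x-y|/\rho(x))^{-k}|x-y|^{-(n-1)}$ in the spirit of Shen~\cite{Shen1996}, and then differentiate $\Delta\Gamma=\V\Gamma$ once more to reduce the Hessian estimate to a $C^{1,\gamma}$ bound on a slightly smaller ball. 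The bookkeeping of the $\rho$-weights across the two regimes $R\ll\rho(x)$ and $R\gg\rho(x)$, together with absorbing the polynomial losses $(R/\rho(x))^j$ (for $j=2,3,4$) into Shen's $N$-decay, is where most of the technical labour lies.
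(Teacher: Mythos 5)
The paper does not actually prove Lemma~\ref{lemma5.1}; it only cites \cite[Lemma 3.6]{pt2016}, so there is no in-paper argument to compare against line by line. Your route --- write $\K(x,y)=D_x^2\Gamma(x,y)$ for the fundamental solution $\Gamma$ of $\La$, start from Shen's bound $|\Gamma(x,y)|\le C_N(1+|x-y|/\rho(x))^{-N}|x-y|^{2-n}$ for $\V\in RH_{n/2}$ \cite{Shen1995}, and upgrade it to a Hessian bound by interior elliptic estimates on $B(x,|x-y|/4)$, using the pointwise bounds $\V\ls\rho^{-2}$, $|D\V|\ls\rho^{-3}$, $|D^2\V|\ls\rho^{-4}$ from the $RH^\star_{n/2}$ hypothesis --- is exactly the standard mechanism behind such kernel estimates and is consistent with what the cited reference does; you also correctly identify why the strengthened class $RH^\star_{n/2}$ is needed at this step. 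Two points where your sketch is not yet a proof. First, the interior estimate you write is missing the zeroth-order source term: for $-\Delta u=f$ on $B(x,R)$ one has $|D^2u(x)|\ls R^{-2}\|u\|_{L^\infty(B(x,R))}+\|f\|_{L^\infty(B(x,R))}+R^{\gamma}[f]_{C^{0,\gamma}(B(x,R))}$, and here the term $\|\V\Gamma(\cdot,y)\|_{L^\infty}$ contributes a factor $(R/\rho)^2$ that must also be absorbed into the $N$-decay; as written the inequality is dimensionally incomplete. Second, on the large ball $B(x,R)$ with $R\gg\rho(x)$ the bounds $\V(z)\ls\rho(z)^{-2}$ etc.\ involve $\rho(z)$, which is \emph{not} comparable to $\rho(x)$ there; you must route every such factor through the two-sided polynomial comparison \eqref{lem:Shen_eq2} and verify that all losses are powers of $(1+|x-y|/\rho(x))$ --- you acknowledge this, but it is precisely the bookkeeping that constitutes the proof, and it is deferred rather than carried out. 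As an outline the proposal is sound; as a self-contained replacement for the citation to \cite{pt2016} it is incomplete.
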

\begin{lemma}
	\label{ntd} Let $\q \in A_p$, $1 \leq p < \vc$ and $f \in L^p( \q )$. Then
	\begin{equation*}
	\fint\limits_{B}\left\vert f(z)\right\vert dz\leq C \q(B) ^{-1/p}\left\Vert f\chi _{B}\right\Vert _{L^p(\q)},
	\end{equation*}
holds for every ball $B\subset \R^n.$
\end{lemma}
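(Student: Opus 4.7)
The plan is to prove the inequality by a direct application of Hölder's inequality combined with the $A_p$ condition, treating the cases $p=1$ and $p>1$ separately.

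For the case $1<p<\vc$, the natural first step is to split the integrand using the weight:
\begin{equation*}
\fint_B |f(z)|\,dz = \frac{1}{|B|}\int_B |f(z)|\q(z)^{1/p}\cdot \q(z)^{-1/p}\,dz.
\end{equation*}
I would then apply Hölder's inequality with exponents $p$ and $p'$ to obtain
\begin{equation*}
\fint_B |f(z)|\,dz \leq \frac{1}{|B|}\|f\chi_B\|_{L^p(\q)}\left(\int_B \q(z)^{-p'/p}\,dz\right)^{1/p'},
\end{equation*}
and use the identity $p'/p=1/(p-1)$ so that the second factor matches the integrand appearing in the definition of $A_p$ (Definition~\ref{def:Muck}). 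From the $A_p$ condition
\begin{equation*}
\left(\fint_B \q(z)^{-1/(p-1)}\,dz\right)^{p-1}\leq \frac{C}{\fint_B \q(z)\,dz} = \frac{C|B|}{\q(B)},
\end{equation*}
I would raise to the power $1/p'$ (noting $1/(p'(p-1))=1/p$) and multiply by $|B|^{1/p'}$ to convert the average back to an integral, obtaining $\left(\int_B \q^{-1/(p-1)}\right)^{1/p'}\ls |B|\,\q(B)^{-1/p}$. Substituting back cancels the $1/|B|$ prefactor and yields exactly the desired bound.

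For the edge case $p=1$, I would instead invoke the $A_1$ definition directly: $\q \in A_1$ means $\fint_B \q \ls \q(x)$ a.e.\ on $B$, so $(\text{essinf}_B \q)^{-1}\ls |B|/\q(B)$, and hence
\begin{equation*}
\int_B |f(z)|\,dz \leq (\text{essinf}_B \q)^{-1}\int_B |f(z)|\q(z)\,dz \ls \frac{|B|}{\q(B)}\|f\chi_B\|_{L^1(\q)},
\end{equation*}
which gives the claim after dividing by $|B|$.

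There is no real obstacle here; the lemma is a standard consequence of the $A_p$ condition, and the only mild bookkeeping concerns the exponent arithmetic $1/p + 1/p' = 1$ together with $p'/p = 1/(p-1)$, which ensures that $A_p$ provides exactly the weighted integral that appears after Hölder. I would therefore keep the write-up short, essentially a two-line computation after each reduction.
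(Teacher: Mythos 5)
Your proposal is correct and follows essentially the same route as the paper: for $p>1$ it splits $|f|=|f|\q^{1/p}\cdot\q^{-1/p}$, applies H\"older with exponents $p$ and $p'$, and invokes the $A_p$ condition to control $\big(\fint_B \q^{-1/(p-1)}\big)^{(p-1)/p}$ by $(|B|/\q(B))^{1/p}$; for $p=1$ it uses the pointwise $A_1$ bound. The only difference is cosmetic (you phrase the $p=1$ case via $\operatorname{essinf}_B\q$, which is arguably cleaner than the paper's write-up), so no further comment is needed.
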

\begin{proof}
	We consider two cases $p=1$ and $p>1$. For the first case $p=1$, since $\q \in A_{1}$ we obtain that
	\begin{equation*}
	\fint\limits_{B}\left\vert f(z)\right\vert dz=\frac{1}{\q(B) }\fint\limits_{B}{\q(B) }f(z)dz\ls \frac{1}{\q(B) }\left\Vert f\chi _{B}\right\Vert _{L^1 (\q) },
	\end{equation*}
	for every ball $B\subset \mathbb{R}^{n}$. For the second case $p>1$, by H\"older's inequality and the definition of $A_{p}$, one has
	\begin{align*}
		\fint\limits_{B}\left\vert f(z)\right\vert dz
		&\leq  \Big(\fint\limits_{B}\left\vert f(z)\right\vert^{p}\q(z)dz\Big)^{1/p}\Big(\fint\limits_{B}\q^{-\frac{1}{p-1}}(z)dz\Big)^{\frac{p-1}{p}} \\
		& \ls  \q(B)^{-1/p}\left\Vert f\chi_{B}\right\Vert_{L^p( \q ) },
	\end{align*}
	for every ball $B\subset \R^n.$ The proof is complete.
\end{proof}

\begin{lemma}\label{ntd1}
Let $1\leq p<s\leq \vc, \ \alpha \in \R, \ \theta \in \left[0,1/p\right)$ and $\q \in A_{p}.$ For any ball $B=B(y,r)$ and $m>0$ let us set 
	\begin{equation}
	\label{eq:F1}
	\mathbb{F}(f,B)=\sum\limits_{j=0}^{\vc }j\left( 1+2^{j}r\rh \right)^{-\frac{m}{N_0+1}}\q\left( 2^{j}B\right) ^{-1/p} \left\Vert f \chi _{2^{j}B}\right \Vert _{L^p( \q ) }.
	\end{equation}
	Then there exists a constant $C>0$ not depending on $B$ such that for all $f \in \Mor$ and $m>(N_0+1)(|\alpha|+\alpha)$, there holds
	\begin{equation}\label{est:rho6}
	\underset{r>0}{\sup }\Big[ \int\limits_{\mathbb{R}^{n}} \Big( \q\left( B\right) ^{1/p-\theta }\left( 1+r\rh \right)^{\alpha }\mathbb{F}(f,B) \Big) ^{s}dy\Big] ^{1/s} \leq C \left\Vert f\right\Vert _{\Mor}.
	\end{equation}
\end{lemma}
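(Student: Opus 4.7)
The plan is to bound the integrand $\q(B)^{1/p-\theta}(1+r\rho^{-1}(y))^{\alpha}\mathbb{F}(f,B)$ pointwise in $y$ by a series $\sum_j c_j\,G_j(y)$, where $c_j$ decays geometrically in $j$ and each $G_j$ has $L^s_y$-norm controlled by $\|f\|_{\Mor}$. Applying Minkowski's inequality on the outer $L^s$-norm will then reduce \eqref{est:rho6} to the convergence of $\sum_j j\,c_j$.

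First, I would combine the two $y$-dependent factors via Lemma~\ref{ntd3}: estimate~\eqref{est:rho3} (more precisely, the intermediate step~\eqref{est:rho4}) gives
\begin{equation*}
(1+r\rho^{-1}(y))^{\alpha}\bigl(1+2^jr\rho^{-1}(y)\bigr)^{-m/(N_0+1)}\lesssim \bigl(1+2^jr\rho^{-1}(y)\bigr)^{\alpha}\bigl(1+2^jr\rho^{-1}(y)\bigr)^{-\mu},
\end{equation*}
where $\mu:=m/(N_0+1)-|\alpha|-\alpha>0$ by the hypothesis $m>(N_0+1)(|\alpha|+\alpha)$. The factor $(1+2^jr\rho^{-1}(y))^{-\mu}$ alone does not supply $j$-decay uniformly in $y$ (it degenerates when $r\rho^{-1}(y)$ is small), so I discard it via $(1+\cdot)^{-\mu}\leq 1$ and extract the needed geometric decay from the weight side.

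For the weight side, the strong-doubling property~\eqref{nv} of $\q\in A_p$ from Lemma~\ref{tr1}(vi) yields some $\delta\in(0,1)$ with $\q(B)\lesssim 2^{-jn\delta}\q(2^jB)$, hence
\begin{equation*}
\q(B)^{1/p-\theta}\q(2^jB)^{-1/p}\lesssim 2^{-j\eta}\,\q(2^jB)^{-\theta},\qquad \eta:=n\delta\bigl(\tfrac{1}{p}-\theta\bigr)>0,
\end{equation*}
where positivity of $\eta$ uses $\theta<1/p$. Multiplying the two bounds gives the pointwise estimate
\begin{equation*}
\q(B)^{1/p-\theta}(1+r\rho^{-1}(y))^{\alpha}\mathbb{F}(f,B)\lesssim \sum_{j=0}^{\infty} j\cdot 2^{-j\eta}\bigl(1+2^jr\rho^{-1}(y)\bigr)^{\alpha}\q(B(y,2^jr))^{-\theta}\|f\chi_{B(y,2^jr)}\|_{L^p(\q)}.
\end{equation*}

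Finally, taking the outer $L^s_y$-norm and applying Minkowski's inequality, the $L^s_y$-norm of the $j$-th summand is precisely the $\Mor$-norm integrand evaluated at radius $r'=2^jr$, and is therefore bounded by $\|f\|_{\Mor}$. Thus \eqref{est:rho6} reduces to the convergence of $\sum_{j\geq 0}j\cdot 2^{-j\eta}$, which holds since $\eta>0$; taking the supremum over $r>0$ completes the argument. The main obstacle is recognizing that the geometric $j$-decay must come from the doubling inequality~\eqref{nv} rather than from the pointwise factor $(1+2^jr\rho^{-1}(y))^{-m/(N_0+1)}$ already present in $\mathbb{F}(f,B)$, which is too weak on its own when $r\rho^{-1}(y)$ is small.
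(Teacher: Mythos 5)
Your proposal is correct and follows essentially the same route as the paper: apply estimate~\eqref{est:rho3} to absorb the factor $(1+r\rho^{-1}(y))^{\alpha}(1+2^jr\rho^{-1}(y))^{-m/(N_0+1)}$ into $(1+2^jr\rho^{-1}(y))^{\alpha}$, use~\eqref{nv} with $1/p-\theta>0$ to extract the geometric decay $2^{-j\delta(1/p-\theta)}$, and then sum after recognizing each term as the $\Mor$-norm integrand at radius $2^jr$. Your explicit invocation of Minkowski's inequality for the outer $L^s$-norm and your remark that the decay cannot come from the $\rho$-factor alone are just slightly more careful renderings of steps the paper leaves implicit.
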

\begin{proof}	
	Be the definition of the function $\mathbb{F}$ in~\eqref{eq:F1}, we can estimate
	\begin{align}\nonumber
	A\  :& = \ \q\left( B\right) ^{1/p-\theta }\left( 1+r\rh \right)^{\alpha }\mathbb{F}(f,B)\\ \nonumber
	  & \ls \sum\limits_{j=0}^{\vc }j\left( 1+r\rh \right)^{\alpha }\q\left( B\right) ^{1/p-\theta } \left( 1+2^{j}r\rh \right) ^{-\frac{m}{N_0+1}} \\ \label{est:5} 
	& \qquad \qquad \times \q\left( 2^{j}B\right) ^{-\frac{1}{p}}\left\Vert f\chi _{2^{j}B}\right\Vert_{L^p( \q ) }.
	\end{align}	
	Using the inequality~\eqref{est:rho3} in Lemma~\ref{ntd3}, we obtain from~\eqref{est:5} that
	\begin{align*}
		A &\ls \sum\limits_{j=0}^{\vc }j\left( 1+2^{j}r\rh \right) ^{\alpha }\q\left( B\right) ^{1/p-\theta }\q\left(2^{j}B\right) ^{-\frac{1}{p}}\left\Vert f\chi _{2^{j}B}\right\Vert_{L^p( \q ) } \\
		&\ls \sum\limits_{j=0}^{\vc }j\left( 1+2^{j}r\rh \right) ^{\alpha }\q\left( 2^{j}B\right) ^{-\theta }\Big( \frac{\q\left( B\right) }{\q\left( 2^{j}B\right) }\Big) ^{1/p-\theta }\left\Vert f\chi _{2^{j}B}\right\Vert _{L^p( \q ) }.
	\end{align*}
	Thanks to \eqref{nv} in Lemma~\ref{tr1} and remark  that $1/p-\theta \geq 0$ in the above inequality, we can estimate $A$ as follows
	\begin{equation*}
	A\ls \sum\limits_{j=0}^{\vc }j2^{-j\delta \left( 1/p-\theta \right) }\left( 1+2^{j}r\rh\right) ^{\alpha }\q\left( 2^{j}B\right) ^{-\theta }\left\Vert f\chi_{2^{j}B}\right\Vert _{L^p( \q ) }.
	\end{equation*}	 
	 By the definition of $\left\Vert f\right\Vert_{\Mor}$ in Definition~\ref{morrey1}, we can conclude that
	\begin{equation*}
	\underset{r>0}{\sup }\Big[ \int\limits_{\mathbb{R}^{n}} \Big( \q\left( B\right) ^{1/p-\theta }\left( 1+r\rh \right)^{\alpha }\mathbb{F}(f,B) \Big) ^{s}dy\Big] ^{1/s} \ls\sum\limits_{j=0}^{\vc	}j2^{-j\delta \left( 1/p-\theta \right) } \left\Vert f\right\Vert_{\Mor},
	\end{equation*}
which leads to~\eqref{est:rho6} with noting that	$1/p-\theta \geq 0$. The proof is complete.
\end{proof}

\medskip

\begin{proof}[Proof of Theorem~\ref{theo1}]
	Let $f\in  \Mor$ and the ball $B=B\left( y,r\right) .$ We decompose $f$ by
	\begin{equation*}
	f=f\chi _{2B}+f\chi _{\left( 2B\right) ^{c}}=:f_{1}+f_{2}.
	\end{equation*}
	Thanks to Lemma~\ref{lemma5.1} and the inequality~\eqref{est:rho2} in Lemma~\ref{ntd3}, for all $m\in \mathbb{N}$, one has
	\begin{align}\nonumber
	\left\vert \T(f_2)\left( x\right) \right\vert &\ls \int\limits_{\left( 2B\right) ^{c}}\left\vert \K\left( x,z\right)	\right\vert \left\vert f\left( z\right) \right\vert dz   \\ \nonumber
	&\ls \sum\limits_{j=2}^{\vc }\int\limits_{S_{j}\left( B\right) }\frac{1}{\left( 1+\left\vert x-z\right\vert \rho^{-1}(x) \right) ^{m}}\frac{1}{\left\vert x-z\right\vert ^{n}}\left\vert f\left( z\right)
	\right\vert dz   \\ \label{l1}
	&\ls \sum\limits_{j=2}^{\vc }\left( 1+2^{j}r\rh \right)^{-\frac{m}{N_0+1}}\frac{1}{\left( 2^{j}r\right) ^{n}}\int \limits_{2^{j}B}\left\vert f(z)\right\vert dz.  
	\end{align}	
	Applying Lemma~\ref{ntd}, we obtain from~\eqref{l1} that	
	\begin{align*}
	\left\vert \T(f_2)\left( x\right) \right\vert\ls\sum\limits_{j=2}^{\vc }\left( 1+2^{j}r\rh \right)^{-\frac{m}{N_0+1}} \times \q\left( 2^{j}B\right) ^{-1/p}   \left\Vert f \chi _{2^{j}B}\right \Vert _{L^p( \q ) },
	\end{align*}	
	which leads to
	\begin{align} \nonumber
	\left\vert \T f\left( x\right) \right\vert &\ls \left\vert \T\left(f_{1}\right) \left( x\right) \right\vert +\sum\limits_{j=0}^{\vc }\left(1+2^{j}r\rh \right) ^{-\frac{m}{N_0+1}}\times \q \left( 2^{j}B\right) ^{-1/p}\left\Vert f\chi _{2^{j}B}\right\Vert_{L^p( \q ) }\\ \label{b3}
	& \ls |\T f_1 (x)|+ \mathbb{F} (f,B),
	\end{align}
for almost everywhere  $x\in B$. From \eqref{b3} and the boundedness of the Riesz transform $\T$ in the weighted Lebesgue space $L^p(\q)$, we get that
	\begin{align}\label{eq:Tf}
\left\Vert \T(f) \chi _{B}\right\Vert _{L^p( \q )}\ls \left\Vert f\chi_{B}\right\Vert _{L^p( \q )}+\q(B)^{1/p}\mathbb{F}(f,B).
	\end{align}	
Multiplying two sides of \eqref{eq:Tf} by $\q\left( B\right)^{-\theta }\left( 1+r\rh \right) ^{\alpha },$ one has
	\begin{align*}
	\q\left( B\right) ^{-\theta }\left( 1+r\rh \right)^{\alpha } & \left\Vert \T(f) \chi_{B}\right\Vert _{L^{p}\left(\q\right) } \ls J_1 + J_2,
	\end{align*}
where	
\begin{align*}
J_1 =  \left\Vert f\chi _{2B}\right\Vert _{L^p( \q ) }\q\left(B\right) ^{-\theta }\left( 1+r\rh \right) ^{\alpha},
\end{align*}
and
\begin{align*}
J_2 = \q\left( B\right) ^{1/p-\theta }\left( 1+r\rh \right)^{\alpha }\mathbb{F}(f,B).
\end{align*}	
Applying \eqref{est:rho1} in Lemma \ref{ntd3}, we can estimate $J_1$ as
\begin{align*}
J_{1} &\ls \q\left( 2B\right) ^{-\theta }\left( 1+2r\rh \right) ^{\alpha }\left\Vert f\chi _{2B}\right\Vert_{L^p( \q ) }\Big( \frac{\q\left( 2B\right) }{\q\left( B\right) }\Big) ^{\theta } \\
&\ls \q\left( 2B\right) ^{-\theta }\left( 1+2r\rh\right) ^{\alpha }\left\Vert f\chi _{2B}\right\Vert _{L^p( \q ) },
\end{align*}
where we use the doubling property~\eqref{eq:doubling} of $\q$ in the last inequality. Combining this estimate and inequality~\eqref{est:rho6} in Lemma \ref{ntd1}, there exists a positive constant $C$ such that
	\begin{equation*}
	\underset{r>0}{\sup }\Big[ \int\limits_{
		\mathbb{R}^{n}} \Big( \q\left( B\right) ^{-\theta }\left( 1+r\rh \right)
	^{\alpha }\left\Vert \T(f) \chi_{B}\right\Vert _{L^{p}\left(
	\q\right) } \Big) ^{s}dy\Big] ^{1/s}  \leq C \left\Vert f\right\Vert _{\Mor}.
	\end{equation*}
Finally, by the definition of $\left\Vert \T f\right\Vert _{\Mor}$, we can conclude that
	\begin{equation*}
	\left\Vert \T f\right\Vert _{\Mor}\ls \left\Vert f\right\Vert _{\Mor},
	\end{equation*}
	which finishes the proof.
\end{proof}

\medskip

\begin{proof}[Proof of Theorem \ref{thm1}]
 The boundedness property of $D^2 u$ can be obtained by the boundedness of $\T f$ in Theorem~\ref{theo1}.
\end{proof}

\section{Boundedness of $\La$-fractional Riesz transform}\label{sec:R-potential}

To consider $\La$-fractional Riesz transform $\mathscr{R}^{\La}_\beta=D \La^{-\beta}$, we recall the classical Riesz potential. In 1974, Muckenhoupt and Wheeden \cite{mu} proposed the boundedness property for the classical Riesz potential $\mathbb{I}_{\beta }$ defined by
\begin{align}\label{def:Ri-potential}
\mathbb{I}_\beta f(x)= \int_{\R^n}\frac{f(y)}{|x-y|^{n-\beta}}dy, 
\end{align}
in weighted Lebesgue space $L^p(\q)$. Their result is stated in the next lemma. 
\begin{lemma}\label{thn}
	\label{mu1}Let $0<\beta <n,\ 1<p<\frac{n}{\beta}$, $\frac{1}{q} = \frac{1}{p} - \frac{\beta}{n}$ and $\q\in
	A_{p,q}$. Then the Riesz potential $\mathbb{I}_{\beta }$ is bounded from $L^{p}\left( \q^{p}\right)$ into $L^{q}\left( \q^{q}\right)$, i.e., there exists a positive constant $C$ such that
	\begin{equation*}
	\left\Vert \mathbb{I}_{\beta } f\right\Vert _{L^{q}\left( \q^{q}\right) }\leq
	C\left\Vert f\right\Vert _{L^{p}\left( \q^{p}\right) },
	\end{equation*}
	for all $f \in L^{p}\left( \q^{p}\right)$.
\end{lemma}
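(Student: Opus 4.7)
The plan is to prove Lemma~\ref{mu1} via the classical two-step strategy: first establish a pointwise Hedberg-type inequality controlling $|\mathbb{I}_\beta f(x)|$ by a power of the Hardy--Littlewood maximal function $Mf(x)$ and a factor containing $\|f\|_{L^p(\q^p)}$, and then integrate using the weighted $L^p$ theory for $M$ under the $A_{p,q}$ hypothesis.

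For the pointwise step I would split, for each $x$ and an arbitrary radius $r>0$,
$$|\mathbb{I}_\beta f(x)|\leq \int_{|x-y|<r}\frac{|f(y)|}{|x-y|^{n-\beta}}\,dy+\int_{|x-y|\geq r}\frac{|f(y)|}{|x-y|^{n-\beta}}\,dy=:A_r(x)+B_r(x).$$
A standard dyadic decomposition of $B(x,r)$ gives $A_r(x)\lesssim r^{\beta}Mf(x)$. For $B_r(x)$, I would dyadically decompose $\{|x-y|\geq r\}$ into annuli of radius $2^k r$, apply H\"older's inequality on each annulus with the factorization $f=(f\q)\cdot\q^{-1}$, and use Definition~\ref{def:Apq} to dominate $\bigl(\int_{B(x,2^{k+1}r)}\q^{-p'}\bigr)^{1/p'}$ by $C|B(x,2^{k+1}r)|^{1/p'+1/q}\q^q(B(x,2^{k+1}r))^{-1/q}$. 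The powers of $2^k r$ collapse exactly, thanks to the hypothesis $1/q=1/p-\beta/n$, and summing the geometric series (convergent because $\q^q$ satisfies reverse doubling as an $A_\infty$ weight; see Lemma~\ref{tr1}(v)) yields $B_r(x)\lesssim \|f\|_{L^p(\q^p)}\,\q^q(B(x,r))^{-1/q}$.

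Optimizing $r$ to balance $A_r(x)$ and $B_r(x)$, and replacing the average of $\q^q$ over $B(x,r)$ by the pointwise value $\q(x)^q$ via the Lebesgue differentiation theorem, produces the weighted Hedberg pointwise estimate
$$|\mathbb{I}_\beta f(x)|\lesssim \|f\|_{L^p(\q^p)}^{1-p/q}(Mf(x))^{p/q}\q(x)^{p/q-1},$$
whose exponents are forced by the scaling relation $1-\beta p/n=p/q$. Raising to the $q$-th power and integrating against $\q^q$ yields
$$\|\mathbb{I}_\beta f\|_{L^q(\q^q)}^q\lesssim \|f\|_{L^p(\q^p)}^{q-p}\int (Mf)^p\,\q^p\,dx,$$
and the proof concludes by applying Muckenhoupt's theorem for the boundedness of $M$ on $L^p(\q^p)$. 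The required condition $\q^p\in A_p$ follows from $\q\in A_{p,q}$ by Jensen's inequality together with $q\geq p$ (which holds since $\beta>0$).

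The main obstacle is the bookkeeping of the weight factor $\q^q(B(x,r))^{-1/q}$ arising from the tail estimate: converting it into the pointwise value $\q(x)$ after optimizing in $r$ requires Lebesgue differentiation and the self-improving regularity of $\q^q$ as an $A_\infty$ weight, with some care needed to make the pointwise optimization valid at almost every $x$. An alternative route that sidesteps this delicacy is to first prove, by a Vitali covering argument, the weighted endpoint $M_\beta\colon L^p(\q^p)\to L^q(\q^q)$ for the fractional maximal operator under $A_{p,q}$, and then compare $\mathbb{I}_\beta f$ with $M_\beta f$ by a good-$\lambda$ inequality of Coifman--Fefferman type; either approach ultimately exploits the same core estimate provided by $A_{p,q}$.
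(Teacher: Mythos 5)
The paper does not actually prove Lemma~\ref{mu1}: it is quoted directly from Muckenhoupt and Wheeden \cite{mu}, so the comparison here is with the classical literature rather than with an argument in the text. Your ``alternative route'' --- the weighted bound $M_\beta\colon L^p(\q^p)\to L^q(\q^q)$ for the fractional maximal operator under $A_{p,q}$, combined with a good-$\lambda$ comparison of $\mathbb{I}_\beta f$ with $M_\beta f$ for $A_\infty$ weights --- is in substance the original Muckenhoupt--Wheeden proof and is sound.

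Your primary route, however, has a genuine gap, located exactly at the step you flag. The tail estimate $B_r(x)\lesssim \|f\|_{L^p(\q^p)}\,\q^q(B(x,r))^{-1/q}$ is correct (the exponent bookkeeping via $1/q=1/p-\beta/n$ and the reverse-doubling summation are fine), and so is the deduction $\q^p\in A_p$. But the balancing radius $r_0$ is determined by $f$ and is in general bounded away from $0$, so the Lebesgue differentiation theorem gives no license to replace $\fint_{B(x,r_0)}\q^q$ by $\q(x)^q$; what you would actually need is $\fint_{B}\q^q\gtrsim \q(x)^q$ for a.e. $x\in B$, i.e. $\q^q\in RH_\infty$, which $A_{p,q}$ does not imply. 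Indeed the asserted pointwise inequality $|\mathbb{I}_\beta f(x)|\lesssim \|f\|_{L^p(\q^p)}^{1-p/q}(Mf(x))^{p/q}\q(x)^{p/q-1}$ is false: take $n=1$, $\beta=1/2$, $p=4/3$, $q=4$, $\q(x)=|x|^{-\epsilon}$ with $\epsilon>0$ small (so $\q\in A_{p,q}$), and $f=\chi_{[1,2]}$. As $x\to 0$ the left-hand side stays bounded below, while $Mf(x)$ and $\|f\|_{L^p(\q^p)}$ remain bounded and $\q(x)^{p/q-1}=|x|^{2\epsilon/3}\to 0$, so the inequality fails on a set of positive measure near the origin; note also that $p/q-1<0$ forces the right-hand side to vanish wherever $\q$ blows up. A Hedberg-type pointwise bound with the Hardy--Littlewood maximal function simply does not survive the passage to $A_{p,q}$ weights in this form, so you should promote the $M_\beta$/good-$\lambda$ argument to the main proof (or cite \cite{mu}, as the paper does).
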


We denote by $\K_\beta(x,y)$ the kernel associated to the $\La$-Riesz potential $\p$. An estimate of the kernel $\K_\beta(x,y)$ is directly obtained by using estimation $(11)$ in \cite[page 241]{Y.Liu1}.

\begin{proposition} \label{td10}
	Let $\frac{1}{2}<\beta\leq 1, \ m\in \N,\ m\geq 2$. There exists $C_{m}>0$ such that for any ball $B$, for all $x\in B, \ y\in (2B)^{c}$ there holds
	\begin{equation}
	|\K_\beta(x,y)| \leq \frac{C_{m}}{\left( 1+\left\vert x-y\right\vert\rho^{-1}(x) \right) ^{m}}\frac{1}{\left\vert x-y\right\vert^{n-\beta_1 }}, 
	\end{equation}
	where $\beta_1=2\beta-1.$
\end{proposition}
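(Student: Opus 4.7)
The plan is to read off the bound from the subordination representation of $\La^{-\beta}$ combined with the gradient heat-kernel estimate for the Schrödinger semigroup, which is essentially the content of inequality $(11)$ of \cite{Y.Liu1}. Starting from
$$\La^{-\beta} f(x) = \frac{1}{\Gamma(\beta)} \int_0^{\infty} t^{\beta-1}\, e^{-t\La} f(x)\, dt,$$
I would write
$$\K_\beta(x,y) = \frac{1}{\Gamma(\beta)} \int_0^{\infty} t^{\beta-1}\, D_x p_t^{\La}(x,y)\, dt,$$
where $p_t^{\La}$ denotes the heat kernel of $\La = -\Delta + \V$. The key input, available because $\V \in RH_{n/2}$, is that for every $N$ there is a constant $C_N$ with
$$|D_x p_t^{\La}(x,y)| \leq \frac{C_N}{t^{(n+1)/2}}\, e^{-c|x-y|^2/t}\,\Big(1 + \tfrac{\sqrt{t}}{\rho(x)}\Big)^{-N}.$$

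Next I would substitute $t = |x-y|^2/s$ in the time integral. A direct calculation shows that this extracts the singular factor $|x-y|^{-(n+1-2\beta)} = |x-y|^{-(n-\beta_1)}$ outside and reduces the task to bounding
$$\mathcal{J}(x,y) := \int_0^{\infty} s^{(n-1)/2 - \beta}\, e^{-cs}\,\Big(1 + \tfrac{|x-y|}{\sqrt{s}\,\rho(x)}\Big)^{-N}\, ds$$
by a multiple of $(1 + |x-y|\rho^{-1}(x))^{-m}$. To do this I would split at $s = 1$. On $(0,1]$ the inequality $1/\sqrt{s} \geq 1$ yields $(1 + |x-y|\rho^{-1}(x)/\sqrt{s})^{-N} \leq (1 + |x-y|\rho^{-1}(x))^{-N}$, and the residual $s$-integral converges because $(n-1)/2 - \beta > -1$ when $\beta \leq 1$ and $n \geq 3$. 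On $(1,\infty)$ the elementary inequality
$$\Big(1 + \tfrac{|x-y|}{\sqrt{s}\,\rho(x)}\Big)^{-N} = \Big(\tfrac{\sqrt{s}}{\sqrt{s} + |x-y|\rho^{-1}(x)}\Big)^{N} \leq \frac{s^{N/2}}{(1+|x-y|\rho^{-1}(x))^{N}}$$
allows the factor $s^{N/2}$ to be absorbed by $e^{-cs}$. Choosing $N = m$ then yields the claimed decay.

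The main obstacle is the gradient heat-kernel bound itself: this is where all the Schrödinger-specific ingredients (reverse Hölder condition on $\V$, Fefferman--Phong inequality, and the properties of the auxiliary function $\rho$ collected in Lemma \ref{lem1}) are used, and it is exactly what \cite{Y.Liu1} establishes. Once that input is granted, the remaining work is bookkeeping, and care is only needed in choosing $N$ large enough in terms of $m$ and in checking that the splitting $s \leq 1$ versus $s > 1$ does not cost a polynomial factor in $|x-y|/\rho(x)$, which is ensured by the two elementary bounds used above. The restriction $\beta \in (1/2,\, 1]$ enters through the convergence of the $s$-integral near zero and through the expected singular exponent $n - \beta_1 > 0$.
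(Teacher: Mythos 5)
Your reduction is sound as bookkeeping: the subordination formula, the substitution $t=|x-y|^{2}/s$ producing the factor $|x-y|^{-(n-\beta_1)}$, and the splitting of the $s$-integral at $s=1$ are all computed correctly, and for what it is worth the paper offers essentially nothing to compare against --- its entire ``proof'' of the proposition is the citation of estimate (11) of \cite{Y.Liu1}. The genuine gap is your key input. The clean gradient heat-kernel bound
\begin{equation*}
|D_x p_t^{\La}(x,y)| \le C_N\, t^{-(n+1)/2}\, e^{-c|x-y|^{2}/t}\Big(1+\tfrac{\sqrt t}{\rho(x)}\Big)^{-N}
\end{equation*}
is \emph{not} available under $\V\in RH_{n/2}$; it essentially requires $\V\in RH_q$ with $q\ge n$ (or pointwise control of $D\V$ as in the class $RH^{\star}_{n/2}$ that the paper imposes for the second-order transform but deliberately not here). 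Indeed, integrating your bound over $t\in(0,\infty)$ would yield $|\nabla_x\Gamma(x,y)|\lesssim|x-y|^{1-n}$ for the fundamental solution $\Gamma$ of $\La$, and this already fails for $n=3$, $\V(x)=|x|^{-3/2}$, which lies in $RH_q$ for every $q<2$ (hence in $RH_{n/2}$) but not in $RH_n$: near $x=0$, with $|y|=1$, one has $\Gamma(x,y)=\Gamma(0,y)+c\,\Gamma(0,y)\,|x|^{1/2}+O(|x|)$, so $|\nabla_x\Gamma(x,y)|\sim|x|^{-1/2}\to\infty$ while $|x-y|$ and $\rho(x)$ stay bounded away from zero.

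Consequently the claim that this input ``is exactly what \cite{Y.Liu1} establishes'' is a mis-attribution. What Shen, Sugano and Liu actually prove under a reverse H\"older condition of order $n/2$ is a gradient estimate for the fundamental solution (hence for the kernel of $D\La^{-\beta}$) carrying an additional term of the shape
\begin{equation*}
\frac{C_N}{\big(1+|x-y|\rho^{-1}(x)\big)^{N}}\;\frac{1}{|x-y|^{\,n-2\beta}}\int\limits_{B(x,|x-y|/4)}\frac{\V(z)}{|z-x|^{\,n-1}}\,dz ,
\end{equation*}
and the substantive work --- absent from your sketch --- is controlling this term, which is where $\beta>1/2$, the Fefferman--Phong inequality and the dyadic decay of $r^{2-n}\int_{B(x,r)}\V$ genuinely enter. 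A telltale symptom is that your argument never uses $\beta\le 1$ (your convergence condition is only $\beta<(n+1)/2$), whereas the proposition and Liu's lemma are restricted to $\beta\le 1$ precisely because of the perturbation argument behind these fundamental-solution estimates. I would add that even granting Liu's (11), the extra potential term does not obviously disappear, so the paper's one-line derivation is itself questionable; but as written, your proof rests on an intermediate estimate that is false in the stated generality, and this must be repaired before the rest of the computation can be used.
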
 

Combining the definition of the Riesz potential $\mathbb{I}_\beta$ in \eqref{def:Ri-potential}, Lemma~\ref{thn} and Proposition~\ref{td10} we may obtain the next lemma.

\begin{lemma}\label{thn1}
Let $\alpha \in \R$,  $\beta \in \left( \frac{1}{2},\frac{n+1}{2}\right), \ 1<p<\frac{n}{2\beta-1}, \ 1<s\leq \vc$ and
$$\frac{1}{q}=\frac{1}{p}-\frac{2\beta-1}{n},\theta \in \left[ 0,1/q\right).$$ 
For any $\q\in A_{p,q}$, the $\La$-fractional Riesz transform $\p$ is bounded from $L^{p}\left( \q^{p}\right)$ into $L^{q}\left( \q^{q}\right)$, i.e., there exists a positive constant $C$ such that
	\begin{equation*}
	\left\Vert \p f\right\Vert _{L^{q}\left( \q^{q}\right) }\leq C\left\Vert f\right\Vert _{L^{p}\left( \q^{p}\right) },
	\end{equation*}
	for all $f \in L^{p}\left( \q^{p}\right)$.
\end{lemma}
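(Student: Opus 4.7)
The plan is to reduce the weighted boundedness of $\p$ to the classical Muckenhoupt--Wheeden inequality for the Riesz potential $\mathbb{I}_{\beta_1}$ stated in Lemma \ref{mu1}, using the pointwise kernel estimate in Proposition \ref{td10}. The key observation is that the factor $(1+|x-y|\rho^{-1}(x))^{-m}$ in the bound for $\K_\beta(x,y)$ is always $\le 1$, so Proposition \ref{td10} yields the simpler pointwise control
\begin{equation*}
|\K_\beta(x,y)| \le \frac{C}{|x-y|^{n-\beta_1}}, \qquad \beta_1 = 2\beta - 1.
\end{equation*}

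First I would substitute this bound into the integral representation of $\p f$ and take absolute values inside, obtaining
\begin{equation*}
|\p f(x)| \le C\int_{\R^n} \frac{|f(y)|}{|x-y|^{n-\beta_1}}\, dy = C\, \mathbb{I}_{\beta_1}(|f|)(x)
\end{equation*}
for almost every $x\in\R^n$. Next I would check that the hypotheses of Lemma \ref{mu1} are met with parameter $\beta_1$ in place of $\beta$: the range $\beta \in (1/2, (n+1)/2)$ gives $\beta_1 \in (0,n)$, the range $1 < p < n/(2\beta-1)$ is exactly $1 < p < n/\beta_1$, and the Sobolev relation $1/q = 1/p - (2\beta-1)/n$ is exactly $1/q = 1/p - \beta_1/n$. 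Since $\q \in A_{p,q}$ by assumption, Lemma \ref{mu1} yields $\|\mathbb{I}_{\beta_1}(|f|)\|_{L^q(\q^q)} \le C\|f\|_{L^p(\q^p)}$, and combining with the pointwise domination gives the desired inequality.

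The only delicate point is that Proposition \ref{td10} is formally stated for $x\in B$ and $y \in (2B)^c$, and for $\beta\in(1/2,1]$, whereas Lemma \ref{thn1} allows $\beta$ up to $(n+1)/2$. To patch this, I would argue that the same kernel estimate is in fact global in $x\neq y$: for the local part $|x-y| \le \rho(x)$ the factor $(1+|x-y|\rho^{-1}(x))^{-m}$ is bounded by a constant, and the needed $|x-y|^{-(n-\beta_1)}$ singularity is the standard size estimate for $\La^{-\beta}$ obtained from the subordination formula and the Gaussian bound on the heat kernel of $\La$; for $|x-y| > \rho(x)$ Proposition \ref{td10} applies directly by choosing a suitable ball centered near $x$. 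Once the kernel bound is established for all admissible $\beta$ and all $x\neq y$, the reduction to $\mathbb{I}_{\beta_1}$ and the appeal to Lemma \ref{mu1} complete the proof.
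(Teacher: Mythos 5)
Your argument is exactly the one the paper intends: it gives no written proof of this lemma beyond the remark that it follows by combining the kernel bound of Proposition~\ref{td10} with the Muckenhoupt--Wheeden inequality of Lemma~\ref{mu1}, which is precisely your pointwise domination $|\p f| \lesssim \mathbb{I}_{\beta_1}(|f|)$ followed by the weighted bound for $\mathbb{I}_{\beta_1}$. Your additional care about the restricted range $\beta\in(1/2,1]$ and the local/global split for the kernel estimate fills in a gap the paper silently passes over, so the proposal is correct and matches the paper's approach.
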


We now proof the following lemma.
\begin{lemma}\label{ntd2}
Let $\alpha \in \R$,  $\beta \in \left( \frac{1}{2},\frac{n+1}{2}\right), \, 1<p<\frac{n}{2\beta-1}, \, 1<s\leq \vc$ and
$$\frac{1}{q}=\frac{1}{p}-\frac{2\beta-1}{n},\, \theta \in \left[ 0,1/q\right),\, \q \in A_{ p,q }.$$ 
For any ball $B=B(y,r)$ and $m>(N_0+1)(|\alpha|+\alpha)$, there exists $C>0$ not depending on $B$ such that for all $f \in \mathbb{M}^{p,s}_{\alpha,\theta}(\q^p,\q^q)$ there holds
	\begin{equation*}
	\underset{r>0}{\sup }\Big[ \int\limits_{\mathbb{R}^{n}} \Big( \q^{q}\left( B\right) ^{1/q-\theta }\left( 1+r\rh
	\right) ^{\alpha }\mathbb{W}(f,B) \Big) ^{s}dy\Big] ^{1/s}   \leq C \left\Vert f\right\Vert _{\mathbb{M}^{p,s}_{\alpha,\theta}(\q^p,\q^q)},
	\end{equation*}
	where the function $\mathbb{W}(f,B)$ is defined by
	\begin{align}\label{WfB}
	\mathbb{W}(f,B)=\sum\limits_{j=0}^{\vc }j\left( 1+2^{j}r\rh \right)^{-\frac{m}{N_0+1}}\q^q \left( 2^{j}B\right) ^{-1/q}   \left\Vert f \chi _{2^{j}B}\right \Vert _{L^p( \q^p ) }.
	\end{align}
\end{lemma}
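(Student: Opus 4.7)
The plan is to adapt the argument of Lemma \ref{ntd1} almost verbatim, since $\mathbb{W}(f,B)$ has exactly the same structure as $\mathbb{F}(f,B)$: the only differences are that the averaging measure $\q^q(2^jB)^{-1/q}$ now carries the exponent $q$ instead of $p$, and the inner Lebesgue norm is taken against $\q^p$. The hypothesis $\q \in A_{p,q}$ transfers (through the lemma quoted just before Definition \ref{morrey1}) to $\q^q \in A_q \subset A_\infty$, which supplies both the doubling property and the self-improvement \eqref{nv} for $\q^q$. These are precisely the two weight-theoretic inputs used in the proof of Lemma \ref{ntd1}.

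Concretely, I would set
$$A := \q^q(B)^{1/q-\theta}\left(1+r\rho^{-1}(y)\right)^\alpha \mathbb{W}(f,B)$$
and distribute the outer factors inside the series defining $\mathbb{W}$. First, the pointwise bound \eqref{est:rho3} of Lemma \ref{ntd3} absorbs the product $(1+r\rho^{-1}(y))^\alpha(1+2^j r\rho^{-1}(y))^{-m/(N_0+1)}$ into $(1+2^j r\rho^{-1}(y))^\alpha$; this is where the assumption $m > (N_0+1)(|\alpha|+\alpha)$ enters. Next, I would split
$$\q^q(B)^{1/q-\theta}\,\q^q(2^jB)^{-1/q} = \q^q(2^jB)^{-\theta}\left(\frac{\q^q(B)}{\q^q(2^jB)}\right)^{1/q-\theta},$$
apply \eqref{nv} for $\q^q$ to bound the last quotient by $2^{-jn\delta}$ for some $\delta \in (0,1)$, and use the hypothesis $\theta < 1/q$ to ensure that $1/q-\theta \geq 0$, producing a geometrically decaying factor $2^{-jn\delta(1/q-\theta)}$ in $j$.

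Combining these reductions gives, pointwise in $y$,
$$A \lesssim \sum_{j=0}^{\infty} j\, 2^{-jn\delta(1/q-\theta)}\left(1+2^j r\rho^{-1}(y)\right)^\alpha \q^q(2^jB)^{-\theta}\|f\chi_{2^jB}\|_{L^p(\q^p)}.$$
I would then take the $L^s(dy)$-norm of both sides, invoke Minkowski's inequality to move the norm inside the sum, and observe that for each fixed $j$ the $L^s$-norm of the $j$-th summand is controlled by $\|f\|_{\mathbb{M}^{p,s}_{\alpha,\theta}(\q^p,\q^q)}$ by relabeling $r \mapsto 2^j r$ in the supremum defining the Morrey norm. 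Summing the convergent series $\sum_j j\, 2^{-jn\delta(1/q-\theta)}$ then finishes the proof.

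The main (and essentially only) obstacle compared to Lemma \ref{ntd1} is keeping straight which weight ($\q^p$ or $\q^q$) plays which role, together with verifying that $\q^q$ enjoys the $A_\infty$-type doubling and self-improvement properties despite the Lebesgue norm being weighted by $\q^p$; this is resolved at the outset by the $A_{p,q} \Rightarrow A_q$ transfer, after which the computation proceeds in lockstep with the earlier argument.
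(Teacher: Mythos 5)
Your proposal is correct and follows essentially the same route as the paper's own proof: absorb the $\rho$-factors via \eqref{est:rho3}, split $\q^q(B)^{1/q-\theta}\q^q(2^jB)^{-1/q}$ and control the ratio by \eqref{nv}, then pass to the $L^s$-norm and sum the geometric series using $1/q-\theta>0$. Your explicit justification that $\q\in A_{p,q}$ yields $\q^q\in A_q\subset A_\infty$ (hence doubling and \eqref{nv} for $\q^q$), and the mention of Minkowski's inequality with the relabeling $r\mapsto 2^jr$, only make explicit steps the paper leaves implicit.
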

\begin{proof}
	By~\eqref{est:rho3} in Lemma~\ref{ntd3}, we can estimate
	\begin{align}\nonumber
	A&:=\q^{q}\left( B\right) ^{1/q-\theta }\left( 1+r\rh\right) ^{\alpha }\mathbb{W}(f,B)\\ \nonumber
	&\ls \q^{q}\left( B\right) ^{-\theta+1/q}\sum\limits_{j=0}^{\vc }\left( 1+2^{j}r\rh\right) ^{\alpha }\left\Vert f\chi _{2^{j}B}\right\Vert _{L^{p}\left(\q^p\right) }\q^{q}\left( 2^{j}B\right) ^{-1/q} \\ \label{est:A1}
	&\ls \sum\limits_{j=2}^{\vc }\Big( \frac{\q^{q}\left( B\right) }{\q^{q}\left( 2^{j}B\right) }\Big) ^{1/q-\theta }\left(1+2^{j}r\rh \right) ^{\alpha }\q^{q}\left(2^{j}B\right) ^{-\theta }\left\Vert f\chi _{2^{j}B}\right\Vert _{L^{p}\left(\q^p\right) }.
	\end{align}
Thanks to estimate \eqref{nv} and noting that $1/q-\theta \geq 0$, one deduces from \eqref{est:A1} that
	\begin{equation}\label{est:A2}
	A\ls \sum\limits_{j=0}^{\vc }\frac{1}{2^{j\delta n\left( 1/q-\theta\right) }}\left( 1+2^{j}r\rh \right) ^{\alpha
}\q^{q}\left( 2^{j}B\right) ^{-\theta }\left\Vert f\chi _{2^{j}B}\right\Vert_{L^{p}\left( \q^p\right) }.
	\end{equation}	
	Taking the supremum both sides of~\eqref{est:A2}, by the definitions of $\left\Vert \cdot\right\Vert _{\mathbb{M}^{p,s}_{\alpha,\theta}(\q^p,\q^q)}$, we obtain that
	\begin{align*}
	 \underset{r>0}{\sup }\Big[ \int\limits_{\mathbb{R}^{n}} \Big( \q^{q}\left( B\right) ^{1/q-\theta }\left( 1+r\rh\right) ^{\alpha }\mathbb{W}(f,B) \Big) ^{s}dy\Big] ^{1/s}  \ls \sum\limits_{j=0}^{\vc}2 ^{-j\delta n( 1/q-\theta) }
\left\Vert f\right\Vert _{\mathbb{M}^{p,s}_{\alpha,\theta}(\q^p,\q^q)}.
	\end{align*}	
	Since $1/q-\theta>0 $ there holds
	\begin{equation*}
	\underset{r>0}{\sup }\Big[ \int\limits_{
		\mathbb{R}^{n}} \Big( \q^{q}\left( B\right) ^{1/q-\theta }\left( 1+r\rh
	\right) ^{\alpha }\mathbb{W}(f,B) \Big) ^{s}dy\Big] ^{1/s} \ls \left\Vert f\right\Vert _{\mathbb{M}^{p,s}_{\alpha,\theta}(\q^p,\q^q)},
	\end{equation*}
	which completes the proof.
\end{proof}

\medskip

\begin{proof}[Proof of Theorem~\ref{theo3}]
	Let $B=B(y,r)$ be a ball in  $\R^n$ and $f \in \mathbb{M}^{p,s}_{\alpha,\theta}(\q^p,\q^q)$. We decompose $f$ as follows
	\begin{equation*}
	f=f\chi _{2B}+f\chi _{(2B)^{c}}=:f_{1}+f_{2}.
	\end{equation*}
	By the linearity of $\p$, one has
	\begin{align*}
	\q^{q}\left( B\right) ^{-\theta } & \left( 1+r\rh \right)^{\alpha }  \Big( \int\limits_{B}\left\vert \p f(x)\right\vert	^{q}\q^{q}(x)dx\Big) ^{1/q} \\
	&\leq \q^{q}\left( B\right) ^{-\theta }\left( 1+r\rh\right) ^{\alpha }\Big( \int\limits_{B}\left\vert \p f_{1}(x)\right\vert ^{q}\q^{q}(x)dx\Big) ^{1/q} \\
	& \qquad \qquad +\q^{q}\left( B\right) ^{-\theta }\left( 1+r\rh\right) ^{\alpha }\Big( \int\limits_{B}\left\vert \p f_{2}(x)\right\vert ^{q}\q^{q}(x)dx\Big) ^{1/q}\\
	& =:  J_{1}+J_{2}.
	\end{align*}
We can estimate $J_1$ by the boundedness from $L^{p}\left( \q^{q}\right)$ into $L^{p}\left( \q^p\right)$ of $\p$ in Lemma~\ref{mu1}, we have 
	\begin{align} \nonumber
		J_{1} &\leq \q^{q}\left( B\right) ^{-\theta }\left( 1+r\rh \right) ^{\alpha }\Big( \int\limits_{2B}\left\vert f(x)\right\vert ^{p}\q^p(x)dx\Big) ^{1/p} \\ \label{est:A3}
		&\leq \Big( \frac{\q^{q}\left( 2B\right) }{\q^{q}\left( B\right) }\Big)^{\theta }\q^{q}\left( 2B\right) ^{-\theta }\left( 1+2r\rh \right) ^{\alpha }\Big( \int\limits_{2B}\left\vert f(x)\right\vert ^{p}\q^p(x)dx\Big) ^{1/p},
	\end{align}
where the last inequality is obtained by inequality~\eqref{est:rho1} in Lemma~\ref{ntd3}. From~\eqref{est:A3}, the doubling property of $\q^{q}$ gives us
	\begin{align}\label{nnt1}
	J_{1}& \ls \q^{q}\left( 2B\right) ^{-\theta }\left( 1+2r\rh \right) ^{\alpha }\Big( \int\limits_{2B}\left\vert
	f(x)\right\vert ^{p}\q^p(x)dx\Big) ^{1/p}.  
	\end{align}
	Thanks to Proposition~\ref{td10}, for every $m\in \N,\, m\geq 2$, there exists $C_{m}>0$ such that for all $x\in B, \, z\in (2B)^c$ there holds
	\begin{equation}\label{est:Kbeta}
	|\K_\beta (x,z)|\leq \frac{C_{m}}{\left( 1+\left\vert x-z\right\vert
		\rho^{-1}(x) \right) ^{m}}\frac{1}{\left\vert x-z\right\vert
		^{n-\beta_1 }}, 
	\end{equation}
	where $\beta_1=2\beta-1$. For $x\in B$ and $z\in S_{j}\left( B\right)$, we see that $\left\vert x-z\right\vert
	\sim 2^{j}r.$ Combining~\eqref{est:Kbeta} and~\eqref{est:rho2} in Lemma~\ref{ntd3}, one obtains
	\begin{align}\nonumber 
	\left\vert \p\left( f_{2}\right) \left( x\right) \right\vert &\leq \int\limits_{\left( 2B\right) ^{c}}\left\vert f\left( z\right) \right\vert |\K_\beta(x,z)|dz  \\ \label{dt2}
	&\ls \sum\limits_{j=1}^{\vc }\frac{1}{\left(1+2^{j}r\rho^{-1}(x) \right) ^{m}}\frac{1}{\left(2^{j}r\right) ^{n-\beta_1 }}\int\limits_{2^{j}B}\left\vert f(z)\right\vert dz, 
	\end{align}
	for all $m\in \N, \, m \geq 2$. Using H\"older's inequality and assumption $A_{p,q}$ of $\q$ it follows that
	\begin{align}\nonumber
	 \frac{1}{\left( 2^{j}r\right) ^{n-\beta_1 }}\int\limits_{2^{j}B}\left\vert f(z)\right\vert dz 	&\ls \frac{1}{\left( 2^{j}r\right) ^{n-\beta_1 }}\left\Vert f\chi _{2^{j}B}\right\Vert _{L^{p}\left( \q^p\right)}\q^{-p^{\prime }}\left( 2^{j}B\right) ^{1/p^{\prime }}   \\ \nonumber
	&\ls \frac{\left\vert 2^{j}B\right\vert ^{1/q+1/p^{\prime }}}{\left(2^{j}r\right) ^{n-\beta_1 }}\left\Vert f\chi _{2^{j}B}\right\Vert_{L^{p}\left( \q^p\right) }\q^{q}\left( 2^{j}B\right) ^{-1/q}   \\ \label{trt}
	&\ls \left\Vert f\chi _{2^{j}B}\right\Vert _{L^{p}\left( \q^p\right)}\q^{q}\left( 2^{j}B\right) ^{-1/q}.  
	\end{align}	
One implies from~\eqref{l1} and~\eqref{trt} that
	\begin{equation}\label{Shen1995}
	\begin{aligned}
	\left\vert \p\left( f_{2}\right) \left( x\right) \right\vert \ls \sum\limits_{j=0}^{\vc }\left( 1+2^{j}r\rh\right) ^{-\frac{m}{N_0+1}}\left\Vert f\chi _{2^{j}B}\right\Vert_{L^{p}\left( \q^p\right) }\q^{q}\left( 2^{j}B\right) ^{-1/q}  \ls \mathbb{W}(f,B),
	\end{aligned}  
	\end{equation}	
which guarantees the estimate of $J_2$ as follows
	\begin{align*}
	J_2 & \ls  \q^{q}\left( B\right) ^{1/q-\theta }\left( 1+r\rh \right) ^{\alpha }\mathbb{W}(f,B).
	\end{align*}	
	The proof is complete by combining this inequality to Lemma~\ref{ntd2}, estimate~\eqref{nnt1} and the definition of $\|\p f\|_{\mathbb{M}^{q,s}_{\alpha,\theta}(\q^q)}$.
\end{proof}

\medskip

\begin{proof}[Proof of Theorem~\ref{thm2}]
The boundedness property of $D u$ can be obtained by the boundedness of $\p f$ in Theorem~\ref{theo3}.
\end{proof}



\begin{thebibliography}{99}
\bibitem{aa1} {\it D. Adams, J. Xiao}: Nonlinear potential analysis on Morrey spaces and their capacities. Indiana Univ. Math. J. {\bf 53} (2004), 1629--1663.

\bibitem{a} {\it  A. Akbulut1, V. Guliyev, M. Omarova}: Marcinkiewicz integrals associated with Schr\"odinger operator and their commutators on vanishing generalized Morrey spaces. Boundary Value Problems (2017) 2017:121.

\bibitem{a2} {\it T. A. Bui}:  The weighted norm inequalities for Riesz transforms of magnetic Schr\"{o}dinger operators. Differential Integral Equations. {\bf 23} (2010), 811--826.

\bibitem{a1} {\it T. A. Bui}: Weighted estimates for commutators of some singular integrals related to Schr\"{o}dinger operators. Bulletin des sciences mathematiques.  {\bf 138} (2) (2014), 270--292.

\bibitem{b1} {\it B. Bongioanni, E. Harboure, O. Salinas}: Riesz transform related to Schr\"{o}dinger operators acting on BMO type spaces. J. Math. Anal. Appl. {\bf 357} (2009), 115--131.

\bibitem{b2} {\it B. Bongioanni, E. Harboure, O. Salinas}: Classes of weights related to Schr\"{o}dinger operators. J. Math. Anal. Appl. {\bf 373} (2011), 563--579.

\bibitem{c1} {\it T. Coulhon, X. T. Duong}: Riesz transforms for $1\leq p\leq 2$. Trans. Amer. Math. Soc. {\bf 351} (3) (1999), 1151--1169.

\bibitem{duong1} {\it X. T. Duong, J. Xiao, L. Yan}: Old and new Morrey spaces with heat kernel bounds. J. Fourier Anal. Appl. {\bf 13} (2007), 87--111.

\bibitem{d1} {\it J. Dziub\'{a}nski, G. Garrigos, T. Martinez, J. Torrea, J. Zienkiewicz}: BMO spaces related to Schr\"{o}dinger operators with potentials satisfying reverse H\"{o}lder inequality. Mat. Z. {\bf 49} (2) (2005),
329--356.

\bibitem{d2} {\it J. Dziub\'{a}nski, J. Zienkiewicz}: $H^{p}$ spaces for Schr\"{o}dinger operators. Fourier Anal. Related Top. {\bf 56} (2002), 45--53.

\bibitem{f1} {\it J. Feuto, I. Fofana, K. Koua}: Espaces de fonctions moyenne fractionnaire intgrables sur les Groupes localement Compacts. Afrika Mat. {\bf 3} (13) (2003), 73--91.

\bibitem{f2} {\it J. Feuto, I. Fofana, K. Koua}: Integrable fractional mean functions on spaces of homogeneous type. Afr. Diaspora J. Math. {\bf 9} (1) (2010), 8--30.

\bibitem{Feuto2014} J. Feuto (2014), "Norm Inequalities in Generalized Morrey Spaces", \emph{J Fourier Anal Appl}, 20(4), 896--909.

\bibitem{f3} {\it I. Fofana}: \'{E}tude d'une classe d'espaces de fonctions contenant les espaces de Lorentz.  Afrika Mat. {\bf 1} (2) (1988),29--50.

\bibitem{Grafakos1} {\it L. Grafakos}: Classical Fourier Analysis, third ed., Graduate Texts in Mathematics, 250 (2014), Springer, New York.

\bibitem{k1} {\it Y. Komori, S. Shirai}: Weighted Morrey spaces and a singular integral operator.  Math. Nachr. {\bf 282} (2009), 219--231.

\bibitem{ks2000} {\it K. Kurata, S. Sugano}: Estimate of the fundamental solution for magnetic Schr\"odinger operators and their applications. Tohoku Math. J. {\bf 52} (2000), 367--382.

\bibitem{KS2009} {\it Y. Komori, S. Shirai}: Weighted Morrey spaces and a singular integral operator. Math. Nachr. {\bf 282} (2009), 219--231.

\bibitem{li} {\it P. Li, X. Wan, C. Zhang}: Schr\"odinger type operators on generalized Morrey spaces. J. Inequal. Appl. (2015) 2015:229

\bibitem{Y.Liu1} {\it Y. Liu}: The weighted estimates for the operators $\V^{\alpha}(-\Delta_G+\V)^{-\beta}$ and $\V^\alpha \na (-\Del_G+\V)^{-\beta}$ on the stratified Lie group $G$. J. Math. Anal. Appl. {\bf 349} (2009), 235--244.

\bibitem{Y.Liu} {\it Y. Liu, L. Wang}: Boundedness for Riesz transform associated with Schr\"odinger operators and its
commutator on weighted Morrey spaces
related to certain nonnegative potentials. J. Inequal. Appl. (2014), 2014:194

\bibitem{l1} {\it F. K. Ly}:   Second order Riesz transforms associated to the Schr\"{o}dinger operator for $p\leq 1$. J. Math. Anal. Appl. {\bf 410} (1) (2014), 391--402.

\bibitem{m1} {\it C. Morrey}: On the solutions of quasi-linear elliptic partial differential equations. Trans. Amer. Math. Soc. {\bf 43} (1938), 126--166.

\bibitem{mu} {\it B. Muckenhoupt, R. L. Wheeden}: Weighted norm inequalities for fractional integrals. Trans. Amer. Math. Soc. {\bf 192} (1974), 261--274.

\bibitem{p1} {\it J. Peetre}: On the theory of $L^{p,\lambda }$ spaces. J. Funct. Anal. {\bf 4} (1969), 71--87.

\bibitem{pt2016} {\it G. Pan, L. Tang}: Solvability for Schr\"odinger equations with discontinuous coefficients. J. Funct. Anal. {\bf 270} (2016), 88--133.

\bibitem{ren} {\it B. Ren. H. Wang}: Boundedness of higher order Riesz transforms associated with Schr\"odinger
type operator on generalized Morrey spaces. J. Nonlinear Sci. Appl., 10 (2017), 2757--2766.

\bibitem{sa} {\it N. Samko}: Weighted Hardy and singular operators in Morrey spaces. J. Math. Anal. Appl. {\bf 350} (2009), 56--72.

\bibitem{Shen1995} {\it Z. Shen}: $L^{p}~$estimates for Schr\"{o}dinger operators with certain potentials. Ann. Inst. Fourier. {\bf 45} (1995), 513--546.

\bibitem{Shen1996} {\it Z. Shen}: Estimates in $L^p$ for magnetic Schr\"odinger operators. Indiana Univ. Math. J.{\bf 45} (1996), 817--841.

\bibitem{Song2010} {\it L. Song, L. Yang}: Riesz transforms associated to Schr\"odinger operators on weighted Hardy spaces. J. Funct. Anal. {\bf 259} (2010), 1466--1490. 

\bibitem{Sugano} {\it S. Sugano}, Estimates for the operators $V^\alpha (-\Delta +V)^{-\beta}$ and $V^\alpha \na (-\Delta+V)^{-\beta}$ with certain nonnegative potentials $V$, {\it Tokyo J. Math.} {\bf 21} (1998) 441--452.

\bibitem{t1} {\it L. Tang, J. Dong}: Boundedness for some Schr\"odinger type operators on Morrey spaces related to certain nonnegative potentials. J. Math. Anal. Appl. {\bf 355} (2009), 101--109.

\bibitem{x1} {\it J. Xiao}: Homothetic variant of fractional Sobolev space with application to Navier--Stokes system. Dyn. Partial Differ. Equ. {\bf 4} (2007), 227--245.

\bibitem{z1} {\it J. Zhong}: Harmonic analysis for some Schr\"odinger type operators. Ph.D.Thesis, Princeton University, 1993.
\end{thebibliography}
\end{document}